\renewcommand{\setminus}{{\smallsetminus}}
\newcommand{\smfrac}[2]{\mbox{\footnotesize$\displaystyle\frac{#1}{#2}$}} % small medium frac
\newcommand{\bp}{\begin{pmatrix}}
\newcommand{\ep}{\end{pmatrix}}
\newcommand{\be}{\begin{equation}}
\newcommand{\ee}{\end{equation}}
\newcommand{\ol}[1]{\overline{#1}}
\numberwithin{equation}{section}
\theoremstyle{plain}
\newtheorem{theorem}[equation]{Theorem}
\newtheorem{lemma}[equation]{Lemma}
\newtheorem{proposition}[equation]{Proposition}
\newtheorem{corollary}[equation]{Corollary}
\newtheorem*{claim*}{Claim}
\theoremstyle{definition}
\newtheorem{remark}[equation]{Remark}
\newtheorem{definition}[equation]{Definition}
\numberwithin{equation}{section}
 \newtheoremstyle{TheoremNum}
        {}{}              %%% space between body and thm
        {\itshape}                      %%% Thm body font
        {}                              %%% Indent amount (empty = no indent)
        {\bfseries}                     %%% Thm head font
        {.}                             %%% Punctuation after thm head
        { }                             %%% Space after thm head
        {\thmname{#1}\thmnote{ \bfseries #3}}%%% Thm head spec
\theoremstyle{TheoremNum}
\def\Z{\mathbb Z}
\def\Q{\mathbb Q}
\def\C{\mathbb C}
\def\L{\Lambda}
\def\wt#1{\widetilde{#1}}
\def\p{\partial}
\def\sm{\setminus}
\def\a{\alpha}
\def\toiso{\xrightarrow{\simeq}}
\def\ll{\langle}
\def\rr{\rangle}
\def\bp{\begin{pmatrix}}
\def\ep{\end{pmatrix}}
\def\ba{\begin{array}}
\def\ea{\end{array}}
\def\bn{\begin{enumerate}}
\def\en{\end{enumerate}}
\def\zpx{\Z[\pi]}
\DeclareMathOperator\Ext{Ext}
\DeclareMathOperator\Hom{Hom}
\DeclareMathOperator\Aut{Aut}
\DeclareMathOperator\Id{Id}
\DeclareMathOperator\GL{GL}
\DeclareMathOperator\Bl{Bl}
\DeclareMathOperator\coker{coker}
\begin{document}

\title{Twisted Blanchfield pairings and symmetric chain complexes}

\author{Mark Powell}
\address{
D\'epartement de Math\'ematiques, Universit\'e du Qu\'ebec \`a Montr\'eal, QC, Canada}
\email{mark@cirget.ca}

\def\subjclassname{\textup{2010} Mathematics Subject Classification}
\expandafter\let\csname subjclassname@1991\endcsname=\subjclassname
\expandafter\let\csname subjclassname@2000\endcsname=\subjclassname
\subjclass{%
 57M25, % Knots and links in $S^3$
 57M27, % Invariants of knots and 3-manifolds
 % 57N13, % Topology of 4-manifolds
% 57N70, % Cobordism and concordance (in low dimension)
%  57Q60; % Cobordism and concordance (in high dimension)
%  57M07, % Topological methods in group theory
}
\keywords{Twisted Blanchfield pairing, symmetric Poincar\'{e} chain complex}

\begin{abstract}
We define the twisted Blanchfield pairing of  a symmetric triad of chain complexes over a group ring $\Z[\pi]$, together with a unitary representation of~$\pi$ over an Ore domain with involution.
 We prove that the pairing is sesquilinear, and we prove that it is hermitian and nonsingular under certain extra conditions.  A twisted Blanchfield pairing is then associated to a 3-manifold together with a decomposition of its boundary into two pieces and a unitary representation of its fundamental group.
\end{abstract}
\maketitle

\section{Introduction}

Let $(N;A,B)$ be an oriented 3-manifold with a decomposition $\partial N = A \cup B$ into codimension zero submanifolds, with $A \cap B$ a 1-submanifold of $\partial N$. Each of $A$, $B$ and $A \cap B$ can be empty.  Let~$R$ be a (not necessarily commutative) ring with involution.  Given a right $R$-module $P$, as described in Section~\ref{section:basic-chain-cx-constructions}, we can use the involution on $R$ to produce a left $R$-module $P^t$.   Let $\pi:=\pi_1(N)$, let $V$ be an $(R,\Z[\pi])$-bimodule, and let $\Theta \colon V \to V^*=\Hom_R(V,R)^t$ be an isomorphism of $(R,\Z[\pi])$-bimodules.  Let $S \subset R$ be a multiplicative subset with respect to which the Ore condition is satisfied. Denote the $S$-torsion submodule of an $R$-module $M$ by $TM$. For full details on this data see Section~\ref{section:algebraic-defn-TBF}.
The \emph{twisted Blanchfield pairing}
\[\Bl \colon TH_1(N,A;V) \times TH_1(N,B;V) \to S^{-1}R/R\]
is sesquilinear, and is by definition adjoint to the sequence of homomorphisms
\begin{equation}\label{eqn:sequence-of-HMs}
TH_1(N,A;V) \to TH^2(N,B;V)
\to \Hom_R(TH_1(N,B;V),S^{-1}R/R)^t\end{equation}
given by (the inverse of) Poincar\'{e}-Lefschetz duality, followed by a map induced by universal coefficients and a Bockstein homomorphism associated to the short exact sequence of coefficients $0 \to R \to S^{-1}R \to S^{-1}R/R \to 0$.  The isomorphism $\Theta$ is used to obtain Poincar\'{e}-Lefschetz duality with $V$ coefficients from Poincar\'{e} duality with $\Z[\pi]$-coefficients; details are given in Lemma~\ref{lem:isoduals} and Definition~\ref{defn:chain-level-blanchfield}.

As a basic example, let $N$ be a closed oriented $3$-manifold and let $\Z=R=V$, with the $\Z\pi$ action on $V=\Z$ given by the augmentation.  Then the twisted Blanchfield pairing is just the classical torsion linking pairing
\[TH_1(N;\Z) \times TH_1(N;\Z) \to \Q/\Z.\]
Another important special case is the
original Blanchfield pairing~\cite{Blanchfield:1957-1} on the free abelian cover of a closed oriented 3-manifold
\[\Bl \colon TH_1(N;\Z[G]) \times TH_1(N;\Z[G]) \to Q(G)/\Z[G],\]
where $G=H_1(N;\Z)/TH_1(N;\Z)$ and $Q(G)$ is the quotient field of $\Z[G]$.  For an oriented knot $K$ in $S^3$ and $X_K  :=S^3 \sm \nu K$ the exterior of $K$, we can take $G=\Z$ and obtain the Blanchfield pairing of the knot
\[\Bl \colon H_1(X_K;\Z[\Z]) \times H_1(X_K,\partial X_K;\Z[\Z]) \to Q(\Z)/\Z[\Z].\]
Note that $H_1(X_K;\Z[\Z]) \cong H_1(X_K,\partial X_K;\Z[\Z])$, and we will see in Section~\ref{section:recovering-classical-blanchfield} that we can replace $H_1(X_K,\partial X_K;\Z[\Z])$ with $H_1(X_K;\Z[\Z])$ to obtain a pairing on $H_1(X_K;\Z[\Z])$.

%In Section~\ref{section:recovering-classical-blanchfield}, we also show how to obtain the torsion linking pairing of the branched cover of a knot from the knot exterior.

Twisted Blanchfield pairings, defined via one dimensional representations to non-commutative rings that factor through a non-abelian solvable quotient~$\pi/\pi^{(n)}$ of the fundamental group~$\pi$, have appeared in several recent papers, including~\cite{Burke:2014, Cha:2014-1, Cochran-Orr-Teichner:1999-1, Cochran-Harvey-Leidy:2009-1, Cochran-Harvey-Leidy:2011-03, Cochran-Harvey-Leidy:2011-02, DavisChris:2014, Franklin:2013, FLNP:2016, Leidy:2006-1, Nosaka-2016-1, Nosaka-2016-2}.  In particular, such twisted Blanchfield pairings of knots and links have been crucial to the study of higher order structures in the concordance of knots and links.

In Definition~\ref{defn:chain-level-blanchfield}, we give a purely algebraic and somewhat general definition of twisted Blanchfield pairings, in terms of A.\ Ranicki's symmetric Poincar\'{e} triads~\cite{Ranicki:1981-1}.  We will recall this theory in Section~\ref{section:Poincare-cxs-definitions}.
The goal of this paper is to use the algebraic viewpoint to give a systematic treatment of the key properties that a Blanchfield pairing might have, namely that it be sesquilinear, hermitian or nonsingular.  The hermitian property in particular does not seem to have been satisfactorily treated in the literature previously.  We collect the principal results of the paper into the next theorem.

\begin{theorem}\label{theorem:main}
Let $(N;A,B)$ be  an oriented 3-manifold with a decomposition $\partial N = A \cup B$ into codimension zero submanifolds, with $A \cap B$ a possibly empty 1-submanifold of $\partial N$.  Let $R$ be a (not necessarily commutative) ring with involution and let $S \subset R$ be a multiplicative subset with respect to which the Ore condition is satisfied. Let $\pi:=\pi_1(N)$ and let $V$ be an $(R,\Z[\pi])$-bimodule.
\begin{enumerate}[(a)]
  \item\label{item:mainthm-defined} The oriented $3$-manifold $N$ determines a symmetric Poincar\'{e} triad. Together with the bimodule $V$ and an isomorphism $\Theta \colon V \to V^*=\Hom_R(V,R)^t$ of $(R,\Z[\pi])$-bimodules, this gives rise to a twisted Blanchfield pairing
      \[\Bl \colon TH_1(N,A;V) \times TH_1(N,B;V) \to S^{-1}R/R\]
      that coincides with the pairing described in (\ref{eqn:sequence-of-HMs}).
  \item\label{item:mainthm-sesqui} The pairing $\Bl$ is sesquilinear, that is $\Bl(q\cdot x,p\cdot y) = q \Bl(x,y) \ol{p}$ for all $x \in TH_1(N,A;V)$, $y \in TH_1(N,B;V)$ and for all $p,q \in R$.
  \item\label{item:mainthm-nonsingular} Suppose in addition that
  \[\Ext^1_R(H_0(N,B;V),S^{-1}R/R) = 0.\]
  Then $\Bl$ is nonsingular; that is, the adjoint map
  \[TH_1(N,A;V) \to \Hom_{R}(TH_1(N,B;V),S^{-1}R/R)^t\]
  is an isomorphism of $R$-modules.
  \item\label{item:mainthm-hermitian}  Suppose that there exists a chain equivalence $\sigma \colon C_*(N,B;V) \simeq C_*(N,A;V)$ such that the diagram of $R$-module chain complexes
\[\xymatrix @R-0.5cm {& C_*(N;V) \ar[dl]_-{} \ar[dr]^-{} & \\  C_*(N,B;V) \ar[rr]^-{\simeq}_-{\sigma} && C_*(N,A;V)
  }\]
commutes up to homotopy.  Then $\sigma$ determines an identification \[TH_1(N,A;V) \toiso TH_1(N,B;V),\] with which we may define a pairing
  \[\Bl  \colon TH_1(N,A;V) \times TH_1(N,A;V) \to S^{-1}R/R.\]
This pairing is hermitian; that is $\Bl(x,y) = \ol{\Bl(y,x)}$ for all $x,y \in TH_1(N,A;V)$.
\end{enumerate}
\end{theorem}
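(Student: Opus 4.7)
The plan is to deduce each of the four statements by first passing to the chain-level symmetric Poincar\'e triad coming from $(N;A,B)$, and then applying the purely algebraic results for twisted Blanchfield pairings of symmetric triads that will be established in the body of the paper. For part (\ref{item:mainthm-defined}), the standard construction produces from the oriented triad $(N;A,B)$ a symmetric Poincar\'e triad of $\Z[\pi]$-module chain complexes using the fundamental class $[N,\partial N]\in H_3(N,\partial N;\Z[\pi])$ together with an Alexander--Whitney diagonal approximation. Base change by $-\otimes_{\Z[\pi]}V$, combined with the isomorphism $\Theta$ via Lemma~\ref{lem:isoduals}, then yields a symmetric Poincar\'e triad of $R$-module chain complexes, to which Definition~\ref{defn:chain-level-blanchfield} applies to give the pairing. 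The assertion that this coincides with the composition (\ref{eqn:sequence-of-HMs}) is then a matter of unpacking the chain-level formula and recognising Poincar\'e--Lefschetz duality, the Bockstein, and the universal coefficient evaluation in turn.

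For part (\ref{item:mainthm-sesqui}) I would check sesquilinearity by tracking the $R$-action through each arrow of (\ref{eqn:sequence-of-HMs}): Poincar\'e--Lefschetz duality is $R$-linear once $\Theta$ is used to pass between left and right module structures, the Bockstein is an honest $R$-module map, and the target $\Hom_R(-,S^{-1}R/R)^t$ is a right module viewed as a left module via the involution, so the composite satisfies $\Bl(qx,py)=q\Bl(x,y)\ol{p}$. For part (\ref{item:mainthm-nonsingular}), after the Poincar\'e--Lefschetz isomorphism it suffices to show that
\[TH^2(N,B;V)\longrightarrow \Hom_R(TH_1(N,B;V),S^{-1}R/R)^t\]
is an isomorphism. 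This factors as a Bockstein $TH^2(N,B;V)\toiso H^1(N,B;S^{-1}R/R\otimes V)$, which is an isomorphism on the torsion part because cohomology with $S^{-1}R$ coefficients is $S$-torsion free, followed by the universal coefficient evaluation. The latter fits into a short exact sequence whose obstruction to bijectivity is exactly $\Ext^1_R(H_0(N,B;V),S^{-1}R/R)$; the hypothesis kills this term, and restriction to the torsion subgroup identifies the image with $\Hom_R(TH_1(N,B;V),S^{-1}R/R)^t$.

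For part (\ref{item:mainthm-hermitian}), the chain equivalence $\sigma$ transports the pairing to one on $TH_1(N,A;V)$ alone. To prove it is hermitian I would argue at the chain level with the symmetric structure $\phi=\{\phi_s\}$ of the triad: the chain map $\phi_0$ realising Poincar\'e--Lefschetz duality satisfies $\phi_0\simeq T\phi_0$ via the homotopy $\phi_1$, where $T$ is the duality transposition. This is the step I expect to be the main obstacle, because one must show that the identification induced by $\sigma$ is compatible with $T$ in exactly the right way. The hypothesis that the triangle commutes up to homotopy is precisely what makes $\sigma$ compatible with the symmetric duality between $C_*(N,A;V)$ and $C_*(N,B;V)$ coming from the triad structure; combined with the chain homotopy $\phi_1$, this compatibility descends through the Bockstein and evaluation maps to yield the homological identity $\Bl(x,y)=\ol{\Bl(y,x)}$.
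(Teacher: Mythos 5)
Your parts (a)--(c) follow essentially the paper's route. For (b) the paper proves sesquilinearity by a direct chain-level computation with the representative formula $\smfrac{1}{s}\ol{z(\Phi_0(x))}$ (Proposition~\ref{Prop:chainlevelBlanchfield}), whereas you track the $R$-action through the homological composite (\ref{eqn:sequence-of-HMs}); that is a legitimate alternative once the coincidence claimed in (a) has actually been checked. For (c) your Bockstein/universal coefficients/Ext-vanishing/duality argument is the paper's Proposition~\ref{prop:non-singular}, with one imprecision worth flagging: the Bockstein does not give an isomorphism $TH^2(N,B;V)\toiso H^1(N,B;S^{-1}R/R\otimes V)$. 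Rather, $\beta\colon H^1(\,\cdot\,;S^{-1}R/R)\to H^2(\,\cdot\,;R)$ surjects onto the $S$-torsion with kernel the image of $H^1(\,\cdot\,;S^{-1}R)$, so a lift of a torsion class is not unique, and one must check (as the paper does) that this indeterminacy evaluates to zero on $TH_1$ because homomorphisms into $S^{-1}R$ vanish on $S$-torsion.

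The genuine gap is in (d). You assert that $\Phi_0\simeq T\Phi_0$ via $\Phi_1$ and that the homotopy-commuting triangle makes $\sigma$ ``compatible with the duality in exactly the right way,'' but this is precisely the content that has to be proved, and the first assertion is false as stated in the relative setting: for a symmetric pair or triad the structure equation is $\Phi_0-\Phi_0^*=(-1)^r\Phi_1\delta_C-\p_C\Phi_1+e(\p\Phi)_1e^*$, with a boundary correction term, and $\Phi_0\colon C^{3-r}\to C_r$ is not even a chain map; only $q_B\Phi_0q_A^*$ is, and establishing that already requires the vanishing $q_Be(\p\Phi)_0e^*q_A^*=0$, which uses that $i_A\colon D\to A$ is split (so $e^*q_A^*$ lands in $B^{3-r}$) together with the explicit matrix form of $(\p\Phi)_0$. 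The heart of the paper's proof is Lemma~\ref{lemma:hermitian-chain-homotopy}: an explicit chain homotopy between $\sigma q_B\Phi_0q_A^*$ and $q_A\Phi_0^*q_B^*\sigma^*$, assembled from three inputs --- the chain homotopy $k$ with $\sigma q_B-q_A=k\p_C+\p_{C/A}k$ supplied by your hypothesis, the higher structure map $\Phi_1$, and the vanishing of $q_Be(\p\Phi)_ie^*q_A^*$ for $i=0,1$. None of this appears in your sketch; you name it as ``the main obstacle'' and stop there. Moreover, even granting that chain homotopy, the conclusion $\Bl(x,y)=\ol{\Bl(y,x)}$ on torsion classes is not a formal descent through the Bockstein: it is the chain-level manipulation of Proposition~\ref{prop:Hermitian}, which first shows that $\wt{\Bl}$ may be computed with $\Phi_0^*$ in place of $\Phi_0$ (the discrepancy pairing into $R$), and then uses that $x$ is $S$-torsion, writing $rx$ as a coboundary, to move the structure map across the pairing. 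So as proposed, (d) is a plan pointing at the difficulty rather than a proof of it.
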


The theorem will be proved in Section~\ref{section:algebraic-defn-TBF}.
We remark that the nonsingularity condition always holds when $R$ is a principal ideal domain and $S$ consists of the nonzero elements of $R$, since in that case $S^{-1}R/R$ is divisible and divisible modules over a principal ideal domain are injective.

Next, for the convenience of the reader, we give some situations in which the above result guarantees that we have a nonsingular and hermitian pairing. The simplest situation is for closed $3$-manifolds; when $A=B=\emptyset$, the condition for (\ref{item:mainthm-hermitian}) is trivially satisfied.

We remark that the data of an isomorphism $\Theta \colon V \to V^*$ is equivalent to an $R$-sesquilinear, nonsingular inner product on $V$ with respect to which the right $\Z[\pi]$-action on $V$ is given by a unitary representation $\a \colon \pi \to \Aut(V)$.  Further discussion can be found at the beginning of Section~\ref{section:algebraic-defn-TBF}.

Interesting examples of representations that record non-abelian data while remaining potentially computable arise as follows.  Consider a complex unitary representation $\beta \colon \pi \to U(d,\C)$ and a homomorphism $\phi \colon \pi \to F$, where $F$ is a finitely generated free abelian group. Consider $\C[F]$ as a ring with involution where the involution sends elements of $F$ to their inverse and acts by complex conjugation on the coefficients.
Then the tensor product representation \[\ba{rcl} \a = \phi \otimes \beta \colon \pi &\to & \Aut(\C[F] \otimes_{\C} \C^d) = \Aut(\C[F]^d) \\
 g &\mapsto & \phi(g) \cdot \beta(g) \ea\]
is also a unitary representation.

\begin{proposition}\label{propn-closed-3-mfld-intro}
 Let $N$ be a closed, oriented $3$-manifold and let $V$ be an $(R,\Z[\pi])$-bimodule with an inner product and the right $\Z[\pi]$ action determined by a unitary representation $\a \colon \pi \to \Aut(V)$ such that \[\Ext^1_R(H_0(N;V),S^{-1}R/R) = 0.\]  Then
 $\Bl  \colon TH_1(N;V) \times TH_1(N;V) \to S^{-1}R/R$ is nonsingular and hermitian.
\end{proposition}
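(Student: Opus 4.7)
The plan is to derive Proposition~\ref{propn-closed-3-mfld-intro} directly from Theorem~\ref{theorem:main} by specializing to the closed case $A = B = \emptyset$. Since $N$ is closed, $\partial N = \emptyset$ admits the trivial decomposition $\emptyset \cup \emptyset$, so all the algebraic machinery of Theorem~\ref{theorem:main} applies. The data of a unitary representation $\alpha \colon \pi \to \Aut(V)$ together with an $R$-sesquilinear nonsingular inner product on $V$ is, as noted in the paragraph preceding the proposition, exactly equivalent to an isomorphism $\Theta \colon V \to V^*$ of $(R,\Z[\pi])$-bimodules; I would first unpack this equivalence to feed $\Theta$ into Theorem~\ref{theorem:main}.

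Next I would invoke the successive parts of Theorem~\ref{theorem:main} in turn. Part~(\ref{item:mainthm-defined}) produces the pairing
\[\Bl \colon TH_1(N;V) \times TH_1(N;V) \to S^{-1}R/R,\]
since with $A = B = \emptyset$ both $TH_1(N,A;V)$ and $TH_1(N,B;V)$ reduce to $TH_1(N;V)$. Part~(\ref{item:mainthm-sesqui}) then yields sesquilinearity for free. For nonsingularity, I apply part~(\ref{item:mainthm-nonsingular}); the relevant hypothesis is $\Ext^1_R(H_0(N,B;V),S^{-1}R/R) = 0$, which in our situation is precisely $\Ext^1_R(H_0(N;V),S^{-1}R/R) = 0$, as assumed.

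For the hermitian property via part~(\ref{item:mainthm-hermitian}), I must supply a chain equivalence $\sigma \colon C_*(N,B;V) \simeq C_*(N,A;V)$ making the triangle over $C_*(N;V)$ commute up to homotopy. This is where one might expect the real work, but in the closed case this step is essentially tautological: with $A = B = \emptyset$ the two relative chain complexes are literally equal to $C_*(N;V)$, the two maps from $C_*(N;V)$ in the diagram are both the identity, and so $\sigma := \Id_{C_*(N;V)}$ makes the triangle commute strictly. Consequently part~(\ref{item:mainthm-hermitian}) delivers $\Bl(x,y) = \overline{\Bl(y,x)}$.

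The main step to verify with any care is thus just the bookkeeping identification $C_*(N,\emptyset;V) = C_*(N;V)$ and that the Poincar\'{e}--Lefschetz duality isomorphism used in Theorem~\ref{theorem:main} reduces, for closed $N$, to the standard Poincar\'{e} duality that underpins the classical Blanchfield pairing; no new analytic or homological obstacle arises. The proof is therefore just a short check that the hypotheses of Theorem~\ref{theorem:main}(\ref{item:mainthm-defined})--(\ref{item:mainthm-hermitian}) are met, with $\sigma = \Id$ as the symmetry witness.
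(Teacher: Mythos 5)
Your argument is correct and is essentially the paper's own proof: the paper likewise observes that for a closed $3$-manifold the chain complexes $A$, $B$ (and $D$) of the associated symmetric triad vanish, so the hermitian condition of Theorem~\ref{theorem:main}(\ref{item:mainthm-hermitian}) holds trivially (with $\sigma=\Id$), and nonsingularity follows from part~(\ref{item:mainthm-nonsingular}) (Proposition~\ref{prop:non-singular}) via the assumed $\Ext^1$ vanishing.
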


Next, we give some easily verifiable criteria for a twisted Blanchfield pairing of a 3-manifold with boundary to be hermitian and nonsingular.

\begin{proposition}\label{propn-3-mfld-bdy-intro}
  Let $X$ be an oriented $3$-manifold with boundary and let $V$ be an $(R,\Z[\pi])$-bimodule with an inner product and the right $\Z[\pi]$ action determined by a unitary representation $\a \colon \pi \to \Aut(V)$, such that at least one of \[\Ext^1_R(H_0(X,\partial X; V),S^{-1}R/R) \text{ or }  \Ext^1_R(H_0(X; V),S^{-1}R/R)\] vanishes.
  Moreover, suppose that the quotient map $C_*(X;V) \to C_*(X,\partial X;V)$ induces an isomorphism $$TH_1(X;V) \toiso TH_1(X,\partial X;V).$$  Then using this identification, the Blanchfield pairing \[\Bl \colon TH_1(X;V) \times TH_1(X,\partial X;V) \to S^{-1}R/R\] induces a pairing $TH_1(X;V) \times TH_1(X;V) \to S^{-1}R/R$, that we also denote by $\Bl$.  The latter pairing is hermitian and nonsingular.
\end{proposition}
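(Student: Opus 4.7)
The plan is to apply Theorem \ref{theorem:main} twice, once with $(A, B) = (\emptyset, \partial X)$ and once with $(A, B) = (\partial X, \emptyset)$, producing Blanchfield pairings
\[\Bl_1 \colon TH_1(X;V) \times TH_1(X,\partial X;V) \to S^{-1}R/R\]
and
\[\Bl_2 \colon TH_1(X,\partial X;V) \times TH_1(X;V) \to S^{-1}R/R,\]
and then to transfer each to a pairing on $TH_1(X;V) \times TH_1(X;V)$ via the given identification $\iota \colon TH_1(X;V) \toiso TH_1(X,\partial X;V)$ induced by the quotient map.

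For nonsingularity, exactly one of the two $\Ext^1$ hypotheses is needed. The vanishing of $\Ext^1_R(H_0(X,\partial X;V), S^{-1}R/R)$ is precisely what Theorem \ref{theorem:main}(\ref{item:mainthm-nonsingular}) requires in order to conclude that $\Bl_1$ is nonsingular; the vanishing of $\Ext^1_R(H_0(X;V), S^{-1}R/R)$ does the same for $\Bl_2$. In either case, the adjoint to the transported pairing factors as the (iso) adjoint of $\Bl_1$ or $\Bl_2$ composed with an isomorphism induced by $\iota$ on the other slot, so it is itself an isomorphism.

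The hermitian property is more delicate, because part (\ref{item:mainthm-hermitian}) of Theorem \ref{theorem:main} does not apply directly: its hypothesis demands a chain equivalence $C_*(X,\partial X;V) \simeq C_*(X;V)$ intertwining the quotient maps, which is strictly stronger than the present hypothesis of an isomorphism on $TH_1$. Instead, I would establish two chain-level identities and combine them. The first is a swap identity
\[\Bl_1(x, y) = \ol{\Bl_2(y, x)} \quad\text{for } x \in TH_1(X;V),\ y \in TH_1(X,\partial X;V),\]
which should come out of the self-duality of the symmetric Poincar\'{e} triad underlying Definition \ref{defn:chain-level-blanchfield}. The second is a compatibility identity
\[\Bl_1(a, \iota b) = \Bl_2(\iota a, b) \quad\text{for } a, b \in TH_1(X;V),\]
obtained by applying naturality of the chain-level construction with respect to the quotient $p \colon C_*(X;V) \to C_*(X,\partial X;V)$ one slot at a time, together with the fact that $\iota$ is by definition $p_*$ on $H_1$. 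Combining these, one has $\Bl_1(a, \iota b) = \Bl_2(\iota a, b) = \ol{\Bl_1(b, \iota a)}$, which is exactly the hermitian property for the transported pairing.

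The main obstacle will be the chain-level verification of the swap and compatibility identities. Both are geometrically natural, but require unpacking Definition \ref{defn:chain-level-blanchfield} and tracking how the symmetric structure on the Poincar\'{e} triad interacts with the Bockstein and with Poincar\'{e}-Lefschetz duality, with particular care taken over signs and over the ordering of $A$ and $B$ when one applies the symmetric structure.
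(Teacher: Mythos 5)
Your route is genuinely different from the paper's and can be made to work, but as written the hermitian step is a gap rather than a proof: the swap identity is exactly where the hermitian content lives, and it is not a formal consequence of anything stated in Theorem~\ref{theorem:main}; it requires a chain-level argument of the type carried out in Lemma~\ref{lemma:hermitian-chain-homotopy} and Proposition~\ref{prop:Hermitian}. Concretely, the duality maps of your two triads are $q\Phi_0$ and $\Phi_0 q^*$, where $q\colon C_*(X;V)\to C_*(X,\partial X;V)$ is the quotient map; the dual of the second is $q\Phi_0^*$, and the symmetric pair equation gives $\Phi_0-\Phi_0^*=\pm\Phi_1\delta\pm\partial\Phi_1\pm f\varphi_1 f^*$ with $f\colon C_*(\partial X;V)\to C_*(X;V)$, so the boundary correction term dies after composing with $q$ (since $qf=0$) and $q\Phi_0\simeq q\Phi_0^*$; feeding this homotopy into the computation of Proposition~\ref{prop:Hermitian} yields your swap identity. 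Your compatibility identity amounts to the observation that both duality maps factor through the maps of the sequence of the pair (this is item (4) of the diagram discussion in Section~\ref{section:recovering-classical-blanchfield}); note that you need it even for nonsingularity in the case where only $\Ext^1_R(H_0(X;V),S^{-1}R/R)$ vanishes, since then Theorem~\ref{theorem:main}(\ref{item:mainthm-nonsingular}) makes $\Bl_2$ nonsingular while the pairing in the statement is transported from $\Bl_1$. Until these verifications are written out, the hermitian claim is unproven, and that is precisely the delicate point the proposition is designed to address.

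For comparison, the paper arranges the same ingredients so that no new chain-level work is needed. Hermitian-ness comes for free by applying Proposition~\ref{prop:Hermitian} to the degenerate triad with $A=B=D=0$ and $C=C_*(X,\partial X;V)$, which is trivially hermitian ($\sigma=\Id$) and need not be Poincar\'e; this gives a hermitian pairing on $TH^2(X,\partial X;V)$. Nonsingularity is then transferred from the Poincar\'e-triad pairing $0\to C_*(\partial X;V)\to C_*(X;V)$ (nonsingular by Proposition~\ref{prop:non-singular}) through a purely homological diagram built from the long exact sequence of the pair, the Bockstein and universal coefficients maps, and the hypothesis that the quotient map induces $TH^2(X,\partial X;V)\toiso TH^2(X;V)$. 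So the paper's trade is the mirror image of yours: you get nonsingularity cheaply from Theorem~\ref{theorem:main} and must work at the chain level for hermitian-ness, whereas the paper gets hermitian-ness cheaply and does its work homologically for nonsingularity. Once the swap and compatibility identities are established along the lines above, your argument would be a legitimate alternative proof.
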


In Section~\ref{section:toroidal-boundary-divided} we consider the more complicated situation of a $3$-manifold with toroidal boundary, where each component of the boundary is decomposed into two annuli as $S^1 \times D^1 \cup_{S^1 \times S^0} S^1 \times D^1$.  One can also extract a nonsingular, hermitian pairing in this situation, with more care.

Further investigation of potential applications of twisted Blanchfield pairings remains an intriguing avenue for future research.  One hopes that the treatment provided in this paper, particularly the flexibility that we allow with regards to the boundary, will prove to be worthwhile.

\begin{remark}
  We note that the techniques of the paper also apply to higher odd dimensional manifolds, such as to the exterior of a high dimensional knot $S^{2k-1} \subset S^{2k+1}$. The definition of the Blanchfield pairing and the hermitian results extend in a straightforward fashion, except that for $k$ even the pairing is skew-hermitian.  The nonsingularity criterion of Proposition~\ref{prop:non-singular} is based on the universal coefficients spectral sequence, and thus requires more care to generalise both the statement and the proof.  %If one restricts to simple knots, this becomes easier.
\end{remark}

Here is a summary of the remainder of the paper.  Section~\ref{section:Poincare-cxs-definitions} gives the necessary background and definitions on symmetric Poincar\'{e} chain complexes, pairs and triads.  Section~\ref{section:algebraic-defn-TBF} gives the algebraic definition of twisted Blanchfield pairings and gives conditions under which they are nonsingular and hermitian, proving Theorem~\ref{theorem:main}.  Section~\ref{section:recovering-classical-blanchfield} recovers the important special cases of the classical Blanchfield pairing of a knot and of the universal abelian cover of a $3$-manifold, as well as the torsion linking pairing on a branched cover of a knot.  Section~\ref{section:recovering-classical-blanchfield} also contains the proof of Proposition~\ref{propn-3-mfld-bdy-intro}. Section~\ref{section:toroidal-boundary-divided} deals with the case of toroidal boundary with each torus divided into two annuli.

\subsection*{Conventions.} Throughout the paper we assume that all $3$-manifolds are connected, compact and oriented, unless we say explicitly otherwise.

\subsection*{Funding} I gratefully acknowledge the support of an NSERC Discovery grant.

\subsection*{Acknowledgments.} I would like to thank Stefan Friedl, Matthias Nagel, Patrick Orson and Andrew Ranicki for several extremely helpful discussions and their invaluable comments and suggestions.  I also thank the referee for helping me to improve the exposition.
The basis for this paper derives from my 2011 University of Edinburgh PhD thesis, supervised by Andrew Ranicki, to whom I am just as grateful now as I was then.

\section{Symmetric Poincar\'{e} complexes}\label{section:Poincare-cxs-definitions}

An involution on a ring $R$ is an additive self map $a \mapsto \ol{a}$ with $\ol{a\cdot b} = \ol{b}\cdot \ol{a}$, $\ol{1}=1$ and $\ol{\ol{a}}=a$.
For example, given a group $\pi$  we will always view $\Z[\pi]$ as a ring equipped with the involution $\ol{\sum_{g\in \pi}n_gg}=\sum_{g\in \pi}n_gg^{-1}$.
The material of Section~\ref{section:Poincare-cxs-definitions} is due to Ranicki, primarily~\cite{Ranicki3}, and the reader looking for more details is referred to there.

\subsection{Basic chain complex constructions and conventions}\label{section:basic-chain-cx-constructions}

Let $R$ be ring with involution.  A left $R$-module $M$ becomes a right $R$-module using the involution, via the action $m \cdot a := \ol{a}m$ for $r\in R, m \in M$.  Denote this right module by $M^t$.  A similar statement holds with left and right switched.  We use the same notation $M^t$ in both instances.
Modules will be left modules by default.

\begin{definition}[Tensor chain complexes]\label{Defn:signsoontensor}
Given chain complexes $(C,d_C)$ and $(D,d_D)$ of finitely generated (f.g.) projective $R$-modules, with $C_r,D_r = 0$ for $r<0$, form the tensor product chain complex $C \otimes_{R} D$ with chain groups:
\[(C^t \otimes_R D)_n := \bigoplus_{p+q=n} \, C^t_p \otimes_R D_q.\]
 The boundary map
\[d_{\otimes} \colon (C^t \otimes_R D)_n \to (C^t \otimes_R D)_{n-1}\]
is given, for $x \otimes y \in C^t_p \otimes_R D_q \subseteq (C^t \otimes_R D)_n$, by
\[d_{\otimes}(x \otimes y) = x \otimes d_D(y) + (-1)^q d_C(x) \otimes y.\]
\end{definition}

\begin{definition}[$\Hom$ chain complexes]\label{defn:hom-chain-complex}
Define the complex $\Hom_R(C,D)$ by
\[\Hom_R(C,D)_n := \bigoplus_{q-p=n}\,\Hom_R(C_p,D_q)\]
with boundary map
\[d_{\Hom} \colon \Hom_R(C,D)_n \to \Hom_R(C,D)_{n-1}\]
given, for $g \colon C^p \to D_q$, by
\[d_{\Hom}(g) = d_D g + (-1)^q g d_C.\]
\end{definition}

\begin{definition}[Dual complex]\label{defn:dual-complex}
The dual complex $C^*$ is defined as a special case of Definition~\ref{defn:hom-chain-complex} with $D_0 = R$ as the only non-zero chain group, which is also an $R$-bimodule.  Explicitly we define $C^r := \Hom_R(C_r,R)^t$, with boundary map
$\delta=d^*_C \colon C^{r-1} \to C^{r}$
defined as
$\delta(g) = g \circ d_C.$
Using that $R$ is a bimodule over itself, the chain groups of $C^*$ are naturally right modules. But we use the involution to make them into left modules, as follows: for $f \in C^*$, let $(a \cdot f)(x) := f(x) \ol{a}$.

The chain complex $C^{-*}$ is defined to be
\[(C^{-*})_r = C^{-r};\;\; d_{C^{-*}} = (d_C)^*=\delta.\]
Also define the complex $C^{m-*}$ by:
\[(C^{m-*})_r = \Hom_R(C_{m-r},R)\]
with boundary maps
\[\partial^* \colon (C^{m-*})_{r+1} \to (C^{m-*})_{r}\]
given by
\[\partial^* = (-1)^{r+1}\delta.\]
\end{definition}

Define the dual of a cochain complex, that is the double dual, to be $C^{**}:=(C^{-*})^{-*}$.
The proof of the next proposition, that allows us to identify a chain complex with its double dual, is a straightforward verification.

\begin{proposition}[Double dual]
For a f.g.\ projective chain complex $C_*$, there is an isomorphism:
\[C_* \xrightarrow{\simeq} C^{**} ;\; x \mapsto (f \mapsto \ol{f(x)}).\]
\end{proposition}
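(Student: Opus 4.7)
The plan is to verify in turn that $\phi_r \colon x \mapsto (f \mapsto \ol{f(x)})$ is a well-defined left $R$-module homomorphism $C_r \to C^{**}_r$, that the collection $\{\phi_r\}$ assembles to a chain map, and that each $\phi_r$ is an isomorphism. Throughout, the main bookkeeping issue is keeping track of whether the $R$-action enters on the left or the right, since the involution is invoked repeatedly to swap sides; this is where any real care is required.

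For well-definedness, note $C^{**}_r = \Hom_R(C^r,R)^t$, so $\phi_r(x)$ should be a left $R$-module map $C^r \to R$. The left action on $C^r$ is $(a \cdot f)(y) = f(y)\ol{a}$, whence $\phi_r(x)(a \cdot f) = \ol{f(x)\ol{a}} = a\ol{f(x)} = a\,\phi_r(x)(f)$. The left action on $C^{**}_r$ is $(a \cdot \psi)(f) = \psi(f)\ol{a}$, and $\phi_r(ax)(f) = \ol{f(ax)} = \ol{af(x)} = \ol{f(x)}\ol{a} = (a \cdot \phi_r(x))(f)$, so $\phi_r$ is itself left $R$-linear.

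Next I would check the chain map property. From Definition~\ref{defn:dual-complex}, $C^{-*}$ has differential $\delta$, so $C^{**} = (C^{-*})^{-*}$ has differential $\delta^* \colon (C^r)^* \to (C^{r-1})^*$ given by $\psi \mapsto \psi \circ \delta$, where $\delta(g) = g \circ d_C$. For $x \in C_r$ and $g \in C^{r-1}$,
\[(\delta^* \phi_r(x))(g) \;=\; \phi_r(x)(g \circ d_C) \;=\; \ol{g(d_C(x))} \;=\; \phi_{r-1}(d_C(x))(g),\]
so $\delta^* \circ \phi_r = \phi_{r-1} \circ d_C$. The sign twist from the alternative dual $C^{m-*}$ does not appear here because $C^{**}$ is built via $C^{-*}$.

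Finally, for the isomorphism claim, I would observe that $\phi$ is natural and additive in $C_*$, so one may reduce to checking the case of a single f.g.\ projective module $P$ in some fixed degree. Since every f.g.\ projective module is a direct summand of a f.g.\ free module, it suffices to treat $P = R$. There the standard identification $R^* \cong R$ given by $f \mapsto \ol{f(1)}$ (with inverse $a \mapsto (b \mapsto b\ol{a})$) applied twice makes $\phi$ correspond to the identity on $R$, which finishes the proof. The only potential obstacle is confirming that the various involutions and $(-)^t$ operations line up correctly, but once the conventions are unpacked this is entirely mechanical.
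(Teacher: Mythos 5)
Your verification is correct and is exactly the ``straightforward verification'' that the paper leaves to the reader: the module-structure checks, the chain-map computation with the differential of $C^{**}=(C^{-*})^{-*}$, and the reduction via additivity and direct summands to the case $P=R$ all line up with the paper's conventions (in particular the left actions $(a\cdot f)(x)=f(x)\ol{a}$ and the sign-free differential $\delta(g)=g\circ d_C$). No gaps.
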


\begin{definition}[Slant map]\label{defn:slant-map}
The slant map is the isomorphism:
\[\ba{rcl} \backslash \colon C^t \otimes_R C & \to & \Hom_R(C^{-*},C_*)\\
x \otimes y & \mapsto & \left(g \mapsto \overline{g(x)}y\right) \ea\]
\end{definition}

\begin{definition}[Transposition]
Let $C_*$ be a chain complex of projective left $R$-modules for a ring with involution~$R$.  Define the transposition map
\[\ba{rcl} T \colon C_p^t \otimes C_q &\to& C_q^t \otimes C_p\\
x \otimes y &\mapsto& (-1)^{pq} y \otimes x.\ea\]
This $T$ generates an action of $\Z_2$ on $C \otimes_{R} C$.
Also let $T$ denote the corresponding map on homomorphisms:
\[\ba{rcl} T \colon \Hom_R(C^p,C_q) &\to& \Hom_R(C^q,C_p)\\
\theta &\mapsto& (-1)^{pq} \theta^*. \ea\]
 \end{definition}

\begin{definition}[Algebraic mapping cone]\label{Defn:algmappingcone}
The algebraic mapping cone $\mathscr{C}(g)$ of a chain map $g \colon C \to D$ is the chain complex given by:
\[d_{\mathscr{C}(g)} = \left(\begin{array}{cc} d_D & (-1)^{r-1}g \\ 0 & d_C \end{array} \right) \colon \mathscr{C}(g)_r = D_r \oplus C_{r-1} \to \mathscr{C}(g)_{r-1} = D_{r-1} \oplus C_{r-2}.\]
The (co)homology groups $H_r(g)$  $(H^r(g))$ of a chain map $g$ are defined to be the (co)homology groups $H_r(\mathscr{C}(g))$ $(H^r(\mathscr{C}(g)))$ of the algebraic mapping cone.
\end{definition}

\subsection{Symmetric Poincar\'{e} complexes and closed manifolds}\label{section:symm-cxs-closed-mflds}

In this section we explain symmetric structures on chain complexes, following Ranicki~\cite{Ranicki3}.  In particular the chain complex of a manifold inherits a symmetric structure.
To begin, for simplicity, we take $M$ to be an $n$-dimensional \emph{closed} manifold with $\pi:= \pi_1(M)$ and universal cover $\wt{M}$.
Let
$\wt{\Delta} \colon \wt{M} \to \wt{M} \times \wt{M}$; $y \mapsto (y,y)$
be the diagonal map on the universal cover of $M$.  This map is $\pi$-equivariant, so we can take the quotient by the action of $\pi$ to obtain
\begin{equation}\label{topoldiagonal2}
\Delta \colon M \to \wt{M} \times_{\pi} \wt{M},
\end{equation}
where $\wt{M} \times_{\pi} \wt{M} := \wt{M} \times \wt{M}/(\{(x,y) \sim (gx,gy) \,|\,g \in \pi)$.
The symmetric structure arises from an algebraic version of this map.
The Eilenberg-Zilber theorem says that there is a natural chain equivalence $EZ  \colon C(\wt{M} \times \wt{M}) \simeq C(\wt{M}) \otimes_{\Z} C(\wt{M})$.
By abuse of notation, let
\[\wt{\Delta} \colon C(\wt{M}) \to C(\wt{M}) \otimes_{\Z} C(\wt{M})\]
be the composition of the induced map on chain complexes with $EZ$.
Take tensor product over $\Z[\pi]$ with $\Z$ of both the domain and codomain, to obtain:
\[\Delta_0 \colon C(M) \to C(\wt{M}) \otimes_{\Z[\pi]} C(\wt{M}).\]
The map $\Delta_0$ evaluated on the fundamental class $[M]$, composed with the slant map
yields \[\varphi_0:= \backslash \Delta_0([M]) \in \Hom_{\Z[\pi]}(C^{n-*}(\wt{M}),C_*(\wt{M})).\]
In the case $n=3$ we have a collection of $\Z[\pi]$-module homomorphisms of the form:
\[\xymatrix @C+1cm{
C^0 \ar[r]^{\partial^*_1} \ar[d]^{\varphi_0}& C^1 \ar[r]^{\partial^*_2} \ar[d]^{\varphi_0} & C^2 \ar[d]^{\varphi_0} \ar[r]^{\partial^*_3} & C^3 \ar[d]^{\varphi_0}\\
C_3 \ar[r]_{\partial_3} & C_2 \ar[r]_{\partial_2} & C_1 \ar[r]_{\partial_1} & C_0}\]
A symmetric structure also comprises higher chain homotopies $\varphi_s \colon C^r \to C_{n-r+s}$ which measure the failure of $\varphi_{s-1}$ to be symmetric on the chain level. We will introduce the higher symmetric structures next, using the higher diagonal approximation maps.

\begin{definition}\label{Defn:higherdiagonalmaps}
A \emph{chain diagonal approximation} is a chain map $\Delta_0 \colon C_* \to C_* \otimes C_*$, with a collection, for $i \geq 1$, of chain homotopies $\Delta_i \colon C_* \to C_* \otimes C_*$ between $\Delta_{i-1}$ and $T\Delta_{i-1}$.  That is, the $\Delta_i$ satisfy the relations:
\[\partial \Delta_i - (-1)^i\Delta_i\partial = \Delta_{i-1} + (-1)^iT\Delta_{i-1}. \]
%Note that $\Delta_i \colon C_k \to (C_* \otimes C_*)_{k+i}$ is a map of degree $i$.
\end{definition}

The following theorem of J.~Davis \cite{Davis} ensures the existence algebraically of the diagonal approximation for an acyclic chain complex.
For the ordinary singular chain complex of a space, one can use the Alexander-Whitney diagonal approximation, but for cellular or handle chain complexes with finitely generated chain groups, the next theorem can be reassuring.  In particular, most irreducible $3$-manifolds have contractible universal cover, whence their augmented $\Z[\pi]$-module chain complexes are acyclic.

\begin{theorem}[J.~Davis]\label{Thm:davisdiag}
Let $C = (C_i,\partial)_{0 \leq i \leq n}$ be a chain complex of free $\Z[\pi]$-modules in non-negative dimensions, with augmentation $\a \colon C_0 \to \Z$, such that the augmented chain complex is acyclic.  Then there exists a $\Z[\pi]$-module chain diagonal approximation $\Delta_i, i=0,1,\dots,n$ ($\Delta_i = 0$ for $i >n$), as in Definition \ref{Defn:higherdiagonalmaps}, satisfying:
\begin{enumerate}[(i)]
  \item For all $j$, $\Delta_j(C_i) \subset \bigoplus_{m\leq i,n\leq i}\,C_m\otimes C_n$.
  \item  $(\a \otimes 1) \circ \Delta_0 = 1$.
  \item $(1 \otimes \a) \circ \Delta_0 = 1$.
  \item For all $i$, for any $c \in C_i$, there is an $a \in C_i \otimes C_i$ such that
  $\Delta_i(c) - c \otimes c = a + (-1)^iTa$.
\end{enumerate}
Furthermore, any two choices of such maps are chain homotopic.
\end{theorem}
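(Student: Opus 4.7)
The plan is an acyclic-models induction resting on two facts. First, the hypothesis makes $C$ into a free $\Z[\pi]$-resolution of $\Z$. Second, the tensor square $C \otimes_\Z C$ with diagonal $\pi$-action and augmentation $\a \otimes \a$ is again a free $\Z[\pi]$-resolution of $\Z$: each $\Z[\pi] \otimes_\Z \Z[\pi]$ with diagonal action is $\Z[\pi]$-free on $\{1 \otimes g : g \in \pi\}$, and K\"unneth gives $H_*(C \otimes_\Z C) \cong H_*(C) \otimes H_*(C) = \Z$ concentrated in degree zero. The transposition $T$ is a $\pi$-equivariant chain involution on $C \otimes_\Z C$.

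I would first construct $\Delta_0$ as a $\pi$-equivariant chain map $C \to C \otimes_\Z C$ lifting the identity on $\Z$, furnished by the fundamental lemma of homological algebra. On a $\Z[\pi]$-basis of $C_0$, normalised (by a unimodular change of basis using surjectivity of $\a$) so that each generator $e$ has $\a(e) \in \{0,1\}$, I take $\Delta_0(e) = e \otimes e$ whenever $\a(e) = 1$, thereby securing (ii), (iii) and the base case of (iv). I then extend to $C_j$ for $j \geq 1$ inductively by lifting $\Delta_0(\partial c)$ along $\partial$ using acyclicity, and the preimage is chosen inside the filtration $F_j(C \otimes C) := \bigoplus_{m,n \leq j} C_m \otimes C_n$ in order to secure (i).

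For $i \geq 1$ I proceed by double induction on $i$ and the dimension $j$. Assuming $\Delta_0, \ldots, \Delta_{i-1}$ built and $\Delta_i$ defined on $C_{<j}$, I solve $\partial \Delta_i(c) = u$ on a basis element $c \in C_j$, where
\[u := (-1)^i \Delta_i(\partial c) + \Delta_{i-1}(c) + (-1)^i T \Delta_{i-1}(c).\]
A direct calculation from the inductive identities for $\Delta_{i-1}$ and $\Delta_{i-2}$ shows $\partial u = 0$, and acyclicity furnishes a preimage, chosen in $F_j(C \otimes C)$ to maintain (i). Property (iv) is vacuous for $j \neq i$; for $j = i$, (i) forces $\Delta_i(c) \in C_i \otimes C_i$, and the $C_i \otimes C_i$-component of $\Delta_i(c)$ is unconstrained by the boundary equation because $\partial$ carries that summand outside $C_i \otimes C_i$. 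I would use this freedom to adjust $\Delta_i(c)$ so that $\Delta_i(c) - c \otimes c$ lies in $\im(1 + (-1)^i T)$.

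The main obstacle is reconciling (iv) with $\pi$-equivariance: the diagonal adjustments must be chosen uniformly across a chosen basis so that the full map is $\pi$-equivariantly well defined, and one must verify that the space of cycles in $F_i(C \otimes C)_{2i}$ projects onto a full set of coset representatives in $C_i \otimes C_i / \im(1 + (-1)^i T)$. Uniqueness of the family up to chain homotopy is then obtained by an entirely parallel induction: two choices $\{\Delta_i\}$ and $\{\Delta_i'\}$ are connected by chain homotopies $h_i$ built using the same cycle-to-boundary lifting argument in the hom complex between the two free resolutions $C$ and $C \otimes_\Z C$ of $\Z$.
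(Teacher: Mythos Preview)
The paper does not prove this theorem; it is quoted from Davis and used as a black box, so there is no proof in the paper to compare against.

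Your acyclic-models outline is the standard route and is correct in its broad strokes: both $C$ and $C\otimes_\Z C$ (with the diagonal $\pi$-action) are free $\Z[\pi]$-resolutions of $\Z$, and the fundamental lemma then manufactures $\Delta_0$ and the higher homotopies inductively.  Two places deserve more care than you give them.  First, for condition~(i) when $i\geq 1$: you need the cycle $u$ to bound \emph{inside} $F_j=(\tau_{\leq j}C)^{\otimes 2}$, not merely in $C\otimes C$.  But $F_j$ is not acyclic: K\"unneth gives $H_j(F_j)\cong Z_j\oplus Z_j$ and $H_{2j}(F_j)\cong Z_j\otimes Z_j$ with $Z_j=\ker\partial_j$, so at the edge cases $i=1$ and $i=j$ you must actually check that your particular $u$ represents zero in $H_*(F_j)$, not simply invoke acyclicity.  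Second, your assertion that the $C_i\otimes C_i$-component of $\Delta_i(c)$ is ``unconstrained by the boundary equation'' is not literally true; what is true is that one may alter $\Delta_i(c)$ by a cycle in $(F_i)_{2i}$, and you then need these cycles to hit every coset of $\im(1+(-1)^iT)$ containing $c\otimes c$, equivariantly in $c$.  You flag this last point as the crux, and that is accurate: reconciling (iv) with $\pi$-equivariance and the filtration is precisely where the substance of Davis's argument lies, and your sketch stops short of resolving it.
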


\begin{remark}
Higher diagonal maps are related to the Steenrod squares.  The cup product of $f \in H^i(C)$ and $g \in H^j(C)$ is given by
\[f\cup g = \Delta_0^*(f^t \otimes g) \in H^{i+j}(C),\]
where $f^t$ is the induced map on $C_i^t$. Suppose that $C$ is a chain complex over a ring $R$.  For a cohomology class $f \in H^r(C)$ we define
\[Sq^i(f) = \Delta_{r-i}^*(f^t \otimes f).\]
If $2R=0$ or if $i$ is odd this induces a map $Sq^i \colon H^r(C) \to H^{r+i}(C)$.
\end{remark}
\noindent Using the higher $\Delta_i$ we can define the entire symmetric structure on a chain complex.

\begin{definition}[Symmetric Poincar\'{e} chain complex]\label{Defn:Qgroups}
Let $W$ be the standard free $\Z[\Z_2]$-resolution of $\Z$ shown below.
\[\dots \to \Z[\Z_2] \xrightarrow{1+T} \Z[\Z_2] \xrightarrow{1-T}\Z[\Z_2] \xrightarrow{1+T}\Z[\Z_2] \xrightarrow{1-T} \Z[\Z_2].\]
Given a finitely generated projective chain complex $C_*$ over $R$, define the symmetric $Q$-groups to be:
\[Q^n(C) := H_n\big(\Hom_{\Z[\Z_2]}(W,C^t \otimes_R C)\big) \cong H_n\big(\Hom_{\Z[\Z_2]}(W,\Hom_{R}(C^{-*},C_*))\big)\]
An element $\varphi \in Q^n(C)$ can be represented by a collection of $R$-module homomorphisms
\[\{\varphi_s \in \Hom_R(C^{n-r+s},C_r)\,|\,r \in \Z, s \geq 0\}\]
such that:
\[d_C\varphi_s + (-1)^r \varphi_s\delta_C + (-1)^{n+s-1}(\varphi_{s-1}+(-1)^sT\varphi_{s-1}) = 0 \colon C^{n-r+s-1} \to C_r\]
where $\varphi_{-1} = 0$.  The signs which appear here arise from the signs in the boundary maps in Definitions \ref{Defn:signsoontensor} and \ref{defn:hom-chain-complex}.
A pair $(C_*,\varphi)$, with $\varphi \in Q^n(C)$, is called an $n$-dimensional symmetric $R$-module chain complex.  It is called an $n$-dimensional symmetric \emph{Poincar\'{e}} complex if the maps $\varphi_0 \colon C^{n-r} \to C_r$ form a chain equivalence.  In particular this implies that they induce isomorphisms (the cap products) on homology:
\[\varphi_0 \colon H^{n-r}(C) \xrightarrow{\simeq} H_r(C).\]
The symmetric structure is covariantly functorial with respect to chain maps, in that a chain map $f \colon C \to C'$ induces a map $f^\% \colon Q^n(C) \to Q^n(C')$ given by
\[f^\%(\varphi)_s = (f^t \otimes_R f)(\varphi_s) \in C'^t \otimes_R C';\;\text{or}\]
\[\varphi_s \mapsto f\varphi_sf^*.\]
A homotopy equivalence of $n$-dimensional symmetric complexes $f \colon (C,\varphi) \to (C',\varphi')$ is a chain equivalence $f \colon C \to C'$ such that $f^\%(\varphi) = \varphi' \in Q^n(C')$.
\end{definition}

\begin{remark}
The upper indices for induced maps $f^\%$ do not indicate contravariance. They are used to distinguish from quadratic structure, which we will not consider in this paper.  The interested reader could learn about quadratic structure in~\cite{Ranicki3}, for example.
\end{remark}

The symmetric construction, which is the process by which a manifold gives rise to a symmetric chain complex, as in the next proposition, appears in \cite[Part~II,~Proposition~2.1]{Ranicki3}.

\begin{proposition}
  An oriented $n$-dimensional manifold $M$ with $\pi_1(X) = \pi$ gives rise to a symmetric Poincar\'{e} chain complex \[(C := C_*(\wt{M}), \varphi_i := \backslash\Delta_i([M]) \in Q^n(C)),\]
unique up to homotopy equivalence.
\end{proposition}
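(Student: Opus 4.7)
The plan is to construct the symmetric structure directly from a $\pi$-equivariant chain diagonal approximation on $C_*(\wt M)$, verify the $Q^n$-relations of Definition~\ref{Defn:Qgroups} by translating the homotopy relations of Definition~\ref{Defn:higherdiagonalmaps} through the slant map evaluated on the fundamental class, and conclude with classical Poincar\'{e} duality for the Poincar\'{e} condition and an acyclic-models argument for uniqueness. First I would produce $\pi$-equivariant maps $\wt\Delta_i \colon C_*(\wt M) \to C_*(\wt M) \otimes_{\Z} C_*(\wt M)$ satisfying
\[\partial \wt\Delta_i - (-1)^i \wt\Delta_i \partial = \wt\Delta_{i-1} + (-1)^i T \wt\Delta_{i-1}.\]
On singular chains this is the Alexander--Whitney diagonal together with Steenrod's higher diagonals, each of which is $\pi$-equivariant because the geometric diagonal $\wt M \to \wt M \times \wt M$ is; on the cellular or handle chain complex one can instead apply Theorem~\ref{Thm:davisdiag} whenever $\wt M$ is acyclic and transfer to the singular model through a $\pi$-equivariant chain equivalence.

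Using that $\pi$ acts diagonally on the target, the $\wt\Delta_i$ descend to maps $\Delta_i \colon C_*(M) \to C_*(\wt M) \otimes_{\Z[\pi]} C_*(\wt M)$. Applying the slant map of Definition~\ref{defn:slant-map} and evaluating at the fundamental class produces $\varphi_s := \backslash \Delta_s([M]) \in \Hom_{\Z[\pi]}(C^{n-r+s}(\wt M), C_r(\wt M))$. The required $Q^n(C)$-identity
\[d_C \varphi_s + (-1)^r \varphi_s \delta_C + (-1)^{n+s-1}\bigl(\varphi_{s-1} + (-1)^s T \varphi_{s-1}\bigr) = 0\]
is then extracted by applying the displayed $\wt\Delta_i$-relation to $[M]$, using $\partial [M] = 0$, and tracking how the tensor-product differential (Definition~\ref{Defn:signsoontensor}) and the $\Hom$-differential (Definition~\ref{defn:hom-chain-complex}) are interchanged by the slant map; the slant map also converts $T\wt\Delta_{s-1}$ into $T\varphi_{s-1}$ in the sense of the transposition on $\Hom_R(C^p,C_q)$.

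The Poincar\'{e} condition is handled by noting that on homology $\varphi_0$ is the cap product $\cap [M] \colon H^{n-r}(M;\Z[\pi]) \to H_r(M;\Z[\pi])$, an isomorphism by classical Poincar\'{e} duality for closed oriented manifolds with $\Z[\pi]$-local coefficients; since both source and target are bounded complexes of finitely generated projective $\Z[\pi]$-modules, $\varphi_0$ is then a genuine chain equivalence. For uniqueness, any two chain diagonal approximations are chain homotopic (the second half of Theorem~\ref{Thm:davisdiag} in the acyclic case; a standard acyclic-models argument in general), and such a homotopy, combined with the covariant functoriality $f^\%$ of the symmetric structure, yields a homotopy equivalence between the resulting symmetric Poincar\'{e} complexes. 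I expect the main obstacle to be the sign bookkeeping in the derivation of the $Q^n$-identity: the $(-1)^i$ in the higher diagonal relation, the Koszul signs of Definitions~\ref{Defn:signsoontensor} and~\ref{defn:hom-chain-complex}, the sign picked up by the slant map, and the grading of $[M]$ must all be reconciled, and this is the point at which a careless conversion yields the wrong sign.
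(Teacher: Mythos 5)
Your proposal is correct and follows essentially the same route as the paper, which simply invokes Ranicki's symmetric construction (cited as Part~II, Proposition~2.1 of \cite{Ranicki3}) and sketches exactly this procedure beforehand: a $\pi$-equivariant chain diagonal approximation (Alexander--Whitney/Steenrod, or Theorem~\ref{Thm:davisdiag} in the acyclic case), descent over $\Z[\pi]$, the slant map evaluated on $[M]$, Poincar\'{e} duality for the Poincar\'{e} condition, and chain-homotopy uniqueness of the diagonals for well-definedness. No substantive difference to report.
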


\noindent A symmetric Poincar\'{e} complex need not arise from any closed manifold in general.

\subsection{Symmetric Poincar\'{e} pairs and manifolds with boundary}\label{symm-cxs-mfld-bdy}

Let $(X,\partial X)$ be an $(n+1)$-manifold with boundary, with $\pi:=\pi_1(X)$, universal cover $\wt{X} \xrightarrow{p} X$ and $\wt{\partial X}:= p^{-1}(\partial X)$.  Let $[X,\partial X] \in C_{n+1}(X;\Z)$ be a relative fundamental class, which maps under $C_{n+1}(X;\Z) \to C_{n+1}(X,\partial X;\Z)$ to a generator of $H_{n+1}(X,\partial X;\Z)$.  On the chain level, $d_C([X,\partial X]) = (-1)^{n+1} f([\partial X]) \in C_{n}(X)$, where $f$ is the chain level inclusion of the boundary into $X$, and $[\partial X]$ is the fundamental class of the boundary $\partial X$.  We have
\[d_{\otimes} \Delta_0([X,\partial X]) = \Delta_0 d_C ([X,\partial X]) = \Delta_0((-1)^{n+1} f([\partial X])),\]
from which the symmetric pair equations in  Definition~\ref{poincarepaireqns} below are derived.
For a manifold with boundary denote  the collection of maps given by $\backslash \Delta([X,\partial X])$ by $\delta \varphi$, and for the duality maps on the boundary $\backslash \Delta([\partial X])$ we use $\varphi$, since $\partial X$ is a closed manifold.
The pair
\[(f \colon C_*(\wt{\partial X}) \to C_*(\wt{X}), (\delta \varphi, \varphi))\]
is a symmetric Poincar\'{e} pair in the sense of the next definition~\cite[Part~II, Proposition~6.2]{Ranicki3}.  Recall that $\mathscr{C}$ denotes the algebraic mapping cone of Definition~\ref{Defn:algmappingcone}.

\begin{definition}\label{poincarepaireqns}
The \emph{relative $Q$-groups} of an $R$-module chain map $f \colon C \to D$ are defined to be:
\[Q^{n+1}(f) := H_{n+1}\big(\Hom_{\Z[\Z_2]}(W,\mathscr{C}(f^t \otimes_{R} f))\big).\]
An element $(\delta\varphi,\varphi) \in Q^{n+1}(f)$ can be represented by a collection:
\[\{(\delta\varphi_s,\varphi_s) \in (D^t \otimes_R D)_{n+s+1} \oplus (C^t \otimes_R C)_{n+s}\, |\, s \geq 0\}\]
such that:
\begin{eqnarray*}(d_{\otimes}(\delta\varphi_s) + (-1)^{n+s}(\delta\varphi_{s-1}+(-1)^sT\delta\varphi_{s-1}) + (-1)^nf\varphi_sf^*,\\  d_{\otimes}(\varphi_s) + (-1)^{n+s-1}(\varphi_{s-1}+(-1)^sT\varphi_{s-1})) = 0 \\ \in (D^t \otimes_{R} D)_{n+s} \oplus (C^t \otimes C)_{n+s-1}
\end{eqnarray*}
where as before $\delta\varphi_{-1} = 0 = \varphi_{-1}$.
A chain map $f\colon C \to D$ together with an element $(\delta\varphi,\varphi) \in Q^{n+1}(f)$ is called an \emph{$(n+1)$-dimensional symmetric pair}.  A chain map $f$ together with an element of $Q^{n+1}(f)$ is called an \emph{$(n+1)$-dimensional symmetric Poincar\'{e} pair} if the relative homology class $(\delta\varphi_0,\varphi_0) \in H_{n+1}(f^t \otimes_R f)$ induces isomorphisms
\[H^{n+1-r}(D,C) := H^{n+1-r}(f) \xrightarrow{\simeq} H_r(D) \;(0 \leq r \leq n+1).\]
For a symmetric Poincar\'{e} pair corresponding to an $(n+1)$-dimensional manifold with boundary, these are the isomorphisms of Poincar\'{e}-Lefschetz duality.
\end{definition}

\noindent The union construction is used to glue two symmetric pairs together.

\begin{definition}\label{Defn:unionconstruction}~\cite[Part I,~pages~134--6]{Ranicki3}
A \emph{symmetric cobordism} between symmetric complexes $(C,\varphi)$ and $(C',\varphi')$ is an $(n+1)$-dimensional symmetric Poincar\'{e} pair with boundary $(C \oplus C',\varphi \oplus -\varphi')$:
\[((f_C,f_{C'}) \colon C \oplus C' \to D,(\delta\varphi,\varphi \oplus -\varphi')\in Q^{n+1}((f_C,f_{C'}))).\]

We define the \emph{union} of two symmetric cobordisms:
\begin{align*}
  c  = ((f_C,f_{C'}) \colon C \oplus C' &\to D,(\delta\varphi,\varphi \oplus -\varphi')); \text{ and} \\
c' = ((f'_{C'},f'_{C''}) \colon C' \oplus C'' &\to D',(\delta\varphi',\varphi' \oplus -\varphi'')),
\end{align*}
to be the symmetric cobordism given by:
\[c \cup c' = ((f''_C,f''_{C''}) \colon C \oplus C'' \to D'',(\delta\varphi'',\varphi \oplus -\varphi'')),\]
where:
\[D''_r:= D_r \oplus C'_{r-1} \oplus D'_r;\]
\[d_{D''} = \left(\begin{array}{ccc} d_D & (-1)^{r-1}f_{C'} & 0 \\ 0 & d_{C'} & 0 \\ 0 & (-1)^{r-1}f'_{C'} & d_{D'}
\end{array}\right)\colon D''_r \to D''_{r-1};\]
\begin{align*}
  f''_{C} = \left(
              \begin{array}{c}
                f_C \\
                0 \\
                0
              \end{array}
            \right) &\colon C_r \to D''_r; \\
f''_{C'} = \left(
              \begin{array}{c}
                0 \\
                0 \\
                f'_{C''}
              \end{array}
            \right) &\colon C_r \to D''_r; \text{ and}
\end{align*}
\[\delta\varphi''_s = \left(
                        \begin{array}{ccc}
                          \delta\varphi_s & 0 & 0 \\
                          (-1)^{n-r}\varphi'_sf_{C'}^* & (-1)^{n-r+s+1}T\varphi'_{s-1} & 0 \\
                          0 & (-1)^sf'_{C'}\varphi'_s & \delta\varphi'_s \\
                        \end{array}
                      \right)\colon\]
\[(D'')^{n-r+s+1} = D^{n-r+s+1} \oplus (C')^{n-r+s} \oplus D'^{n-r+s+1} \to D''_r = D_r \oplus C'_{r-1} \oplus D'_r \;\;(s \geq 0).\]
We write:
\[(D'' = D \cup_{C'}D',\delta\varphi'' = \delta\varphi\cup_{\varphi'}\delta\varphi').\]
\end{definition}

The next proposition can be found in\cite[Proposition~6.7]{Ranicki-1978}. The proof consists of verifying that the preceding symmetric union construction gives a model for the push out in the homotopy category of symmetric pairs.

\begin{proposition}
  The symmetric cobordism of the geometric union of two cobordisms $(W;M,N) \cup_N (V;N,P)$ along a common boundary component $N$ is chain equivalent to the symmetric union $C_*(W) \cup_{C_*(N)} C_*(V)$ of their respective symmetric cobordisms.
\end{proposition}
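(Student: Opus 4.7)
The plan is to prove the proposition by a universal-property argument: first show that the algebraic union construction of Definition~\ref{Defn:unionconstruction} is a model for the homotopy pushout in the category of symmetric Poincar\'{e} pairs, and then observe that the symmetric Poincar\'{e} pair of a geometric union has exactly this universal property.

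First I would unwind the algebraic side. Looking at the chain module $D''_r = D_r \oplus C'_{r-1} \oplus D'_r$ with its boundary map, one sees that $D''$ is precisely the double mapping cylinder $\mathrm{cyl}(f_{C'}) \cup_{C'} \mathrm{cyl}(f'_{C'})$, or equivalently the algebraic mapping cone $\mathscr{C}\bigl((f_{C'}, -f'_{C'})\colon C' \to D \oplus D'\bigr)$ shifted appropriately. In particular there is a natural chain map $D \cup_{C'} D' \to D''_{\mathrm{strict}} := D \oplus_{C'} D'$ (strict pushout) whenever one of the maps $f_{C'}$, $f'_{C'}$ is a cofibration, and this map is a chain equivalence in that case. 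Hence the algebraic union is the correct homotopy pushout on chain complexes, and the verification that the formulas for the inclusions $f''_C$, $f''_{C''}$ and the structure $\delta\varphi''_s$ satisfy the pair equations of Definition~\ref{poincarepaireqns} (a direct calculation from the defining identities for $\delta\varphi$, $\varphi'$, $\delta\varphi'$) establishes that $(D \cup_{C'} D', \delta\varphi \cup_{\varphi'} \delta\varphi')$ is a symmetric Poincar\'{e} pair.

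Next I would turn to the geometric side. The Mayer--Vietoris decomposition of $W \cup_N V$ gives a short exact sequence of $\Z[\pi]$-chain complexes on the universal cover, where $\pi = \pi_1(W \cup_N V)$, of the form
\[0 \to C_*(\wt{N}) \to C_*(\wt{W}) \oplus C_*(\wt{V}) \to C_*(\wt{W \cup_N V}) \to 0,\]
exhibiting $C_*(\wt{W \cup_N V})$ as the strict pushout, hence chain equivalent to the homotopy pushout, which by the previous paragraph is the algebraic union $C_*(\wt{W}) \cup_{C_*(\wt{N})} C_*(\wt{V})$. The key input is that the fundamental class $[W \cup_N V]$ decomposes in this pushout as the image of $([W], [V])$ modulo the common boundary part, and this decomposition is compatible with diagonal approximation: one can choose $\Delta_i$ on the union whose restrictions to $W$ and $V$ are the chosen diagonal approximations there, with the mismatch along $N$ absorbed by the cylinder summand $C'_{r-1}$. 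Theorem~\ref{Thm:davisdiag} (uniqueness up to chain homotopy) then lets me replace any chosen diagonal on the union with such a compatible choice.

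The main obstacle, and the point where most of the work lies, is the matching of symmetric structures, not of underlying chain complexes. Concretely, the signs and the middle terms in the expression for $\delta\varphi''_s$, notably $(-1)^{n-r}\varphi'_s f_{C'}^*$, $(-1)^{n-r+s+1} T\varphi'_{s-1}$, and $(-1)^s f'_{C'} \varphi'_s$, must be shown to arise precisely from the chain homotopy recording $\Delta_i d_C([W \cup_N V]) - d_{\otimes} \Delta_i([W \cup_N V]) = 0$ after decomposing $[W \cup_N V]$ and using the defining relation $\partial \Delta_i - (-1)^i \Delta_i \partial = \Delta_{i-1} + (-1)^i T \Delta_{i-1}$. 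One verifies that the $T\varphi'_{s-1}$ term in the middle row is exactly the obstruction to $\Delta_{s-1}$ being symmetric restricted to $\wt{N}$, transported through the cylinder. With that verification in place, functoriality of the symmetric construction $f^\%$ under chain equivalences finishes the proof.
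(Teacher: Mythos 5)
Your proposal takes essentially the same route as the paper: the paper's own ``proof'' is precisely the observation (citing Ranicki's Proposition~6.7) that the union construction of Definition~\ref{Defn:unionconstruction} is a model for the pushout in the homotopy category of symmetric pairs, which is exactly the universal-property argument you outline, with the Mayer--Vietoris decomposition and compatible diagonal approximations supplying the geometric input. The only small caveat is that you do not need Theorem~\ref{Thm:davisdiag} (which assumes an acyclic augmented complex); naturality of the Alexander--Whitney/Eilenberg--Zilber diagonal under the inclusions of $W$, $V$ and $N$ already gives the required compatibility.
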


\subsection{Symmetric Poincar\'{e} triads}\label{section:symm-Pe-triads}

Next, we give the definition of a symmetric Poincar\'{e} triad.  This is the algebraic version of a manifold with boundary where the boundary is split into two along a codimension one submanifold.

\begin{definition}\label{Defn:symmPoincaretriad}~\cite[Definition~20.26]{HDKT}
An \emph{$(n+2)$-dimensional symmetric (Poincar\'{e}) triad} is a triad of f.g.\ projective $R$-module chain complexes:
\[\xymatrix @C+0.5cm{\ar @{~>}[dr]^{g}
D \ar[r]^{j_A} \ar[d]_{j_B} & A \ar[d]^{i_A} \\ B \ar[r]_{i_B} & C
}\]
with chain maps $i_A$, $i_B$, $j_{A}$, $j_B$, a chain homotopy $g \colon i_A \circ j_A \simeq i_B \circ j_B$ (the square need only commute up to homotopy) and structure maps $(\Phi,\varphi^A,\varphi^B,\chi)$ such that:
$(D,\chi)$ is an $n$-dimensional symmetric (Poincar\'{e}) complex,
\[(j_B \colon D \to B,(\varphi^B,\chi)) \text{ and } (j_A \colon D \to A,(\varphi^A,\chi))\]
are $(n+1)$-dimensional symmetric (Poincar\'{e}) pairs, and
\[(e \colon A \cup_{D} B \to C,(\Phi,\varphi^A \cup_{\chi} -\varphi^B)) \]
is an $(n+2)$-dimensional symmetric (Poincar\'{e}) pair, where:
\[e = \left(\begin{array}{ccc} i_A \, , & (-1)^{r-1}g \, , & -i_B \end{array} \right) \colon (A)_r \oplus D_{r-1} \oplus (B)_r \to C_r.\]
\end{definition}

Now we suppose that \emph{all the maps of a symmetric (Poincar\'{e}) triad are split injections}.
This may be arranged up to chain equivalence for any symmetric triad, using the algebraic mapping cylinder construction~\cite[pages~80--1]{Ranicki:1981-1}.
Now the chain homotopy $g$ vanishes; that is, we require $i_A\circ j_A = i_B \circ j_B$ precisely, not just up to chain homotopy.

We may replace all mapping cones with cokernels.  Thus for a chain map $f \colon D \to C$ which is a split injection, let $C/D:= \coker(f \colon D \to C)$ and (re)define the homology groups $H_r(f) := H_r(C/D)$.  In the notation of Definition~\ref{Defn:symmPoincaretriad}, the quotient maps $q_A \colon C \to C/A$ and $q_B \colon C \to C/B$ and the symmetric structure maps induce chain maps \[(C/A)^{n+2-r} \xrightarrow{q_A^*} C^{n+2-r} \xrightarrow{\Phi_0} C_{r} \xrightarrow{q_B} (C/B)_r.\]
The central map $\Phi_0$ is not a chain map in general, but the composite $q_B\Phi_0q_A^*$ is a chain map.  This is not too hard to see, but just to be on the safe side a proof is provided as part of the proof of Lemma~\ref{lemma:hermitian-chain-homotopy}.
With $p_B \colon B \to B/D$ and $p_C \colon C \to C/(A \cup_D B)$ the quotient maps, by similar arguments we also have chain maps
\[\varphi^B_0  p_B^* \colon (B/D)^{n+1-r} \xrightarrow{p^*_B} B^{n+1-r} \xrightarrow{\varphi_0^B} B_{r}\]
and
\[\Phi_0  p_C^* \colon (C/(A\cup_D B))^{n+2-r} \xrightarrow{p^*_C} C^{n+2-r} \xrightarrow{\Phi_0} C_{r}.\]
Now suppose that the $(n+2)$-dimensional symmetric triad
\[\xymatrix {%\ar @{} [dr] |{\stackrel{}{\sim}}
D \ar[r]^{j_A} \ar[d]_{j_B} & A \ar[d]^{i_A} \\ B \ar[r]_{i_B} & C
}, (\Phi, \varphi^A,\varphi^B, \chi)\]
is Poincar\'{e} (since $g=0$, we omit it from the notation).  Then, by definition, $\varphi^B_0  p_B^*$ and $\Phi_0  p_C^*$ induce isomorphisms between cohomology and homology analogous to Poincar\'{e}-Lefschetz duality for manifolds with boundary.
It follows that the maps $q_B  \Phi_0 q_A^* \colon (C/A)^{n+2-r} \to (C/B)_r$ induce isomorphisms $H^{n+2-r}(i_A) \toiso H_r(i_B)$, by the five lemma applied to the following diagram:
\[\xymatrix @R+0.5cm @C-0.1cm {H^{n+1-r}(j_B) \ar[d]_{\cong}^{\varphi^B_0 p_B^*} \ar[r] & H^{n+2-r}(e) \ar[r] \ar[d]_{\cong}^{\Phi_0  p_C^*} & H^{n+2-r}(i_A) \ar[r] \ar[d]^{q_B\Phi_0q_A^*} & H^{n+2-r}(j_B) \ar[r] \ar[d]_{\cong}^{\varphi^B_0 p_B^*} & H^{n+3-r}(e) \ar[d]_{\Phi_0  p_C^*}^{\cong} \\
H_r(B) \ar[r] & H_r(C) \ar[r] & H_r(i_B) \ar[r] & H_{r-1}(B) \ar[r] & H_{r-1}(C).
}\]
The same applies with $A$ and $B$ switched.  By abuse of notation we will also refer to the map $q_B \Phi_0  q_A^* \colon (C/A)^{n+2-r} \to (C/B)_r$ as~$\Phi_0$.

The following proposition follows from three applications of the relative symmetric construction of \cite[Part~II,~Proposition~6.2]{Ranicki3}.
Given a connected space $X$ and a connected subspace $Y \subset X$, choose a path $\gamma_Y$ from the basepoint of $Y$ to the basepoint of $X$.  Produce a chain complex \[C_*(Y;\Z[\pi_1(X)]) := \Z[\pi_1(X)] \otimes_{\Z[\pi_1(Y)]} C_*(Y;\Z[\pi_1(Y)])\] as the chain complex of the pullback cover of $Y$, pulling back the universal cover of $Y$ with respect to the inclusion $Y \subset X$.    The map $\pi_1(Y) \to \pi_1(X)$ depends on the choice of path $\gamma_Y$.
Now let $Z := \bigsqcup Z_i \subset X$ be a disconnected subspace of $X$.  Choose a path $\gamma_{Z_i}$ from the basepoint of $X$ to the basepoint of $Z_i$ for each $i$, and thus obtain homomorphisms $\pi_1(Z_i) \to \pi_1(X)$. Define:
\[C_*(Z;\Z[\pi_1(X)]) := \bigoplus_i C_*(Z_i;\Z[\pi_1(X)]) = \bigoplus_i \Z[\pi_1(X)] \otimes_{\Z[\pi_1(Z_i)]} C_*(Z_i;\Z[\pi_1(Z_i)]).\]

\begin{proposition}\label{prop:triad-from-manifold}
  Let $N$ be an $n$-dimensional manifold with boundary $\partial N = A \cup_D B$, where $A,B \subset \partial N$ are codimension 0 submanifolds with $A \cap B = D$, a possibly empty codimension 1 submanifold of $\partial N$.  Let $\pi = \pi_1(N)$.  This manifold triad determines an $n$-dimensional $\zpx$-module symmetric Poincar\'{e} triad as in Definition~\ref{Defn:symmPoincaretriad}
with $C = C_*(N;\zpx)$ and $D$, $A$ and $B$ $\zpx$-modules chain complexes with the same notation as their respective submanifolds of $\partial N$.
\end{proposition}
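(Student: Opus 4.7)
The plan is to apply three instances of Ranicki's relative symmetric construction \cite[Part~II, Proposition~6.2]{Ranicki3}, together with one absolute symmetric construction on the closed boundary piece $D$, and then glue the resulting pair data via the union construction of Definition~\ref{Defn:unionconstruction}. First, fix basepoints in $N$, $A$, $B$ and in each component of $D$, with paths from the basepoint of $N$ to each other basepoint; these determine the pullback $\Z[\pi]$-module chain complexes $D, A, B, C$ together with the chain maps $j_A, j_B, i_A, i_B$ induced by the subspace inclusions. Since $i_A \circ j_A$ and $i_B \circ j_B$ are both induced by $D \hookrightarrow N$, the square commutes strictly and the triad homotopy may be taken to be $g=0$. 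After passing to algebraic mapping cylinders we may further assume that all four maps are split injections.

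Second, use Theorem~\ref{Thm:davisdiag} inductively to construct $\Z[\pi]$-equivariant higher diagonal approximations $\Delta^D, \Delta^A, \Delta^B, \Delta^N$ compatible with the four chain maps. Start with $\Delta^D$ on $D$; extend across $j_A$ and $j_B$ to obtain $\Delta^A, \Delta^B$, using the existence and homotopy uniqueness statements of Theorem~\ref{Thm:davisdiag}; then extend both simultaneously across $i_A$ and $i_B$ to produce $\Delta^N$. Choose chain-level representatives of the relative fundamental classes $[D] \in C_{n-2}(D)$, $[A, D] \in C_{n-1}(A)$, $[B, D] \in C_{n-1}(B)$, and $[N,\partial N] \in C_n(N)$ with boundary relations $\partial [A,D] = (-1)^{n-1} j_A[D]$, $\partial [B,D] = (-1)^{n-1} j_B[D]$, and $\partial [N,\partial N] = (-1)^n\bigl(i_A[A,D] - i_B[B,D]\bigr)$.

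Third, apply the slant map of Definition~\ref{defn:slant-map} to the diagonal approximations evaluated on these fundamental classes, producing
\[ \chi := {\backslash}\Delta^D([D]),\quad \varphi^A := {\backslash}\Delta^A([A,D]),\quad \varphi^B := {\backslash}\Delta^B([B,D]),\quad \Phi := {\backslash}\Delta^N([N,\partial N]). \]
The absolute symmetric construction of Section~\ref{section:symm-cxs-closed-mflds} shows that $(D,\chi)$ is an $(n-2)$-dimensional symmetric Poincar\'{e} complex. Two applications of the relative symmetric construction yield $(j_A,(\varphi^A,\chi))$ and $(j_B,(\varphi^B,\chi))$ as $(n-1)$-dimensional symmetric Poincar\'{e} pairs; thanks to the compatibility of $\Delta^A, \Delta^B$ with $\Delta^D$, the boundary term of each pair agrees with $\chi$ on the nose. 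A third application to $e \colon A \cup_D B \to C$ with relative class $[N,\partial N]$ produces $(e,(\Phi,\varphi^A \cup_\chi -\varphi^B))$ as the required outer symmetric Poincar\'{e} pair, with the union formula of Definition~\ref{Defn:unionconstruction} emerging from the chain-level identity $d_\otimes \Delta^N([N,\partial N]) = \Delta^N \partial [N,\partial N]$ combined with the boundary relations fixed in Step~2.

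The Poincar\'{e} property for each of the three pairs reduces to the standard chain-level Poincar\'{e}--Lefschetz duality for oriented compact manifolds, realised by cap product with the corresponding fundamental class. The main obstacle is the inductive compatibility of diagonal approximations: one must choose the lifts so that the gluing formula of Definition~\ref{Defn:unionconstruction} holds strictly, not merely up to higher homotopy, and this forces careful repeated invocation of Theorem~\ref{Thm:davisdiag} at each stage. Everything else is sign bookkeeping that follows the template of \cite[Part~II, Proposition~6.2]{Ranicki3}.
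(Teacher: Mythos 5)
Your overall skeleton is the same as the paper's: three applications of Ranicki's relative symmetric construction \cite[Part~II,~Proposition~6.2]{Ranicki3}, with care about basepoints and basing paths to define the pullback $\Z[\pi]$-module chain complexes of the (possibly disconnected) pieces $D$, $A$, $B$, and with $g=0$ and split injectivity arranged via algebraic mapping cylinders. However, the step on which your argument actually rests is unsupported. You propose to build compatible higher diagonal approximations $\Delta^D,\Delta^A,\Delta^B,\Delta^N$ by "inductive" use of Theorem~\ref{Thm:davisdiag}. That theorem has a hypothesis you cannot verify here: it requires the augmented $\Z[\pi]$-module chain complex to be acyclic, which holds for the universal cover of an aspherical complex but fails in general for the pullback covers of $D$, $A$, $B$ (for instance $D \cong S^1\times S^0$, whose $\Z[\pi]$-complex is induced up from a cyclic subgroup) and for $\wt{N}$ itself. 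Moreover, even where it applies, the theorem produces a diagonal approximation on a single complex, unique only up to chain homotopy; it contains no statement about extending a given diagonal approximation across the chain maps $j_A, j_B, i_A, i_B$, and certainly nothing that yields the \emph{strict} compatibility you explicitly demand so that the union formula of Definition~\ref{Defn:unionconstruction} "holds strictly, not merely up to higher homotopy." That strictness is precisely what this method cannot deliver, so the main obstacle you identify at the end is not overcome by your proof.

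The paper avoids this issue by using a different mechanism for compatibility: the (relative) symmetric construction of \cite[Part~II,~Proposition~6.2]{Ranicki3} is defined on singular chain complexes via a natural (Alexander--Whitney/acyclic models) diagonal, so it is natural with respect to maps of spaces, and the three relative constructions for $(A,D)$, $(B,D)$ and $(N,\partial N)$ are automatically compatible without any choices to match up. The identification of the symmetric structure on $\partial N$ with the algebraic union $\varphi^A \cup_\chi -\varphi^B$ is then only needed up to homotopy equivalence of symmetric pairs, which is exactly the content of \cite[Proposition~6.7]{Ranicki-1978} quoted in Section~\ref{section:symm-Pe-triads}; strict chain-level agreement is never required, and the passage to split injections (and $g=0$) is done afterwards, up to chain equivalence, by mapping cylinders. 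If you replace your Davis-theorem induction by naturality of the symmetric construction together with the "geometric union agrees with algebraic union up to homotopy equivalence" statement, the rest of your outline (degrees of the fundamental classes, boundary relations, slant maps, and Poincar\'e--Lefschetz duality for each pair) is correct and coincides with the paper's argument.
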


\section{Algebraic definition of the twisted Blanchfield form}\label{section:algebraic-defn-TBF}

Let $\pi$ be a finitely presented group, let $R$ be a ring with involution and let $V$ be an $(R,\Z[\pi])$-bimodule.
Let
\[\xymatrix {%\ar @{} [dr] |{\stackrel{}{\sim}}
D \ar[r]^{j_A} \ar[d]_{j_B} & A \ar[d]^{i_A} \\ B \ar[r]_{i_B} & C
}, (\Phi, \varphi^A,\varphi^B, \chi)\]
be a 3-dimensional symmetric triad of finitely generated free left $\Z[\pi]$-modules with all maps $i_A, i_B, j_A, j_B$ split injections.  Let $S \subset R$ be a multiplicative subset for which the pair $(R,S)$ satisfies the Ore condition, so that the localisation $S^{-1}R$ is well-defined.
That is, $S$ contains no zero divisors, and for each pair $(s,r) \in S \times R$ with $s \neq 0$, there exists another pair $(s',r')$ such that $r's=s'r$.  See \cite[Chapter~II]{Stenstrom} for an introduction to Ore domains.

Define $V^* := \Hom_R(V,R)^t$, the $R$-dual, converted into a left $R$-module using the involution.
The right $\Z[\pi]$-module structure of $V^*$ is defined via $(f\cdot g)(v) = f(v\ol{g})$, where $f \in V^*$, $g \in \Z[\pi]$ and $v \in V$.
After tensoring a chain complex and its dual with $V$, boundary, coboundary and symmetric structure maps $f=\partial, \partial^*$ or $\Phi_s$ become $\Id \otimes f$, however we usually omit $\Id \otimes$ from the notation.

For future reference we record the following elementary lemma.

\begin{lemma}\label{lem:isoduals}
Let $R$ and $\mathcal{A}$ be rings with involution.
Let $V$ be an $(R,\mathcal{A})$-bimodule and let $W$ be a free finitely generated $\mathcal{A}$-module.  Define $V^* := \Hom_R(V,R)^t$ and $W^* := \Hom_{\mathcal{A}}(W,\mathcal{A})^t$.  Define the right $\mathcal{A}$-module structure on $V^*$ by $(f\cdot s)(v) = f(v\ol{s})$, where $f \in V^*$, $v \in V$ and $s \in \mathcal{A}$.  Then the map
\[ \ba{rcl} (V^*\otimes_{\mathcal{A}} W^*)&\to & (V\otimes_\mathcal{A} W)^* \\
(\phi\otimes f)&\mapsto & (v\otimes w\mapsto \phi(v\cdot f(w))\ea \]
is an isomorphism of left $R$-modules.
\end{lemma}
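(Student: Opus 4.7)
The plan is to first verify that the formula defines a well-defined left $R$-module homomorphism, call it $\Psi$, and then prove the isomorphism claim by reducing to the base case $W = \mathcal{A}$ via compatibility with finite direct sums.

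For well-definedness, I would first fix $\phi \in V^*$ and $f \in W^*$ and check that $(v,w) \mapsto \phi(v \cdot f(w))$ descends to $V \otimes_\mathcal{A} W$. This reduces to the identity $\phi((va) \cdot f(w)) = \phi(v \cdot f(aw))$, which follows from associativity of the right $\mathcal{A}$-action on $V$ combined with the left $\mathcal{A}$-linearity of the underlying map $f \colon W \to \mathcal{A}$ (applying the involution twist $(-)^t$ to produce $W^*$ only changes the side of the $\mathcal{A}$-action, not the underlying function $f$). The resulting map is manifestly left $R$-linear in $v$, so it lies in $(V \otimes_\mathcal{A} W)^*$. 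Next, I would verify the required balance $\Psi((\phi \cdot a) \otimes f) = \Psi(\phi \otimes (a \cdot f))$: both sides evaluate to $v \otimes w \mapsto \phi(v \cdot f(w) \bar{a})$, using the given right $\mathcal{A}$-action $(\phi \cdot a)(v) = \phi(v \bar{a})$ on $V^*$ and the induced left $\mathcal{A}$-action $(a \cdot f)(w) = f(w) \bar{a}$ on $W^*$ (itself obtained from the canonical right action $(f \cdot a)(w) = f(w) a$ on $\Hom_\mathcal{A}(W,\mathcal{A})$ via the involution). Left $R$-linearity of $\Psi$ is then an immediate further check.

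For the isomorphism claim I would observe that $\Psi$ is natural in $W$, and that both functors $W \mapsto V^* \otimes_\mathcal{A} W^*$ and $W \mapsto (V \otimes_\mathcal{A} W)^*$ send finite direct sums to direct sums. Since $W$ is finitely generated free, it is a finite direct sum of copies of $\mathcal{A}$, so it suffices to treat the rank-one case $W = \mathcal{A}$. Here I would use the canonical identifications $V \otimes_\mathcal{A} \mathcal{A} \toiso V$ via $v \otimes b \mapsto vb$ and $\mathcal{A}^* \toiso \mathcal{A}^t$ as left $\mathcal{A}$-modules via $f \mapsto f(1)$. Under these, $\Psi$ sends $\phi \otimes f$ to the element $v \mapsto \phi(v \cdot f(1)) = (\phi \cdot \overline{f(1)})(v)$ of $V^*$, which coincides with the standard isomorphism $V^* \otimes_\mathcal{A} \mathcal{A}^t \toiso V^*$ given by $\phi \otimes a \mapsto \phi \cdot \bar{a}$, completing the proof.

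The lemma is purely formal, so there is no serious obstacle; the only real care required is to keep the involutions straight while transporting the module structures through the $(-)^t$ notation.
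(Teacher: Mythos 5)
Your argument is correct: the balancedness check over $\mathcal{A}$ (both sides giving $v\otimes w\mapsto \phi(v\cdot f(w)\ol{a})$), the left $R$-linearity, and the reduction to $W=\mathcal{A}$ via naturality and additivity in $W$, followed by the identification $\phi\otimes f\mapsto \phi\cdot\ol{f(1)}$, all check out with the module structures as the paper defines them. The paper itself records this lemma as elementary and gives no proof, so there is nothing to compare against; your write-up is the standard verification one would supply, and the involution bookkeeping is handled correctly.
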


As an example, of an $(R,\Z[\pi])$-bimodule, suppose that $V=R^k$, and the $\Z[\pi]$-action on $V$ is defined via a representation $\a \colon \pi \to \Aut(R^k)$.
%Let $\Theta \colon V \toiso V^*$ be an isomorphism of $(R,\zpx)$-bimodules.
A representation $\a \colon \pi \to \Aut(R^k)$ is called \emph{unitary} if $\a(g^{-1}) = \ol{\a(g)}^T$ for each $g \in \pi$.
%Here we assume the representation is valued in automorphisms of a free module with a canonical basis so that we have a canonical matrix representative for $\a(g)$.

In more generality, let $V$ be an arbitrary $(R,\zpx)$-bimodule that possesses an $R$-sesquilinear, nonsingular inner product $\ll \cdot, \cdot \rr$, and let $\a \colon \pi \to \Aut(V)$ be a representation.  Then $\a$ is called \emph{unitary} if $\ll v\a(g),w\a(g)\rr = \ll v,w \rr$ for all $v,w \in V$ and $g \in \pi$.  (The first definition of unitary above corresponds to the standard hermitian inner product on $R^k$.) This data is equivalent to an isomorphism of $(R,\zpx)$-bimodules $\Theta \colon V \toiso V^*$.  From now on we require that $V$ is always equipped with such an isomorphism $\Theta$, or equivalently we require that the representation $\a$ be unitary.

%If the adjoint of $\ll \cdot,\cdot \rr$ coincides with $\Theta \colon V \toiso V^*$, then $\a$ must be unitary, since this is the condition for $\Theta$ to respect the $\Z[\pi]$-module structure.  In fact, an isomorphism $\Theta$ as above is equivalent to an inner product on $V$ with respect to which $\a$ is unitary.

For an $R$-module $M$ let $TM$ denote the maximal $S$-torsion submodule
\[\{m \in M \,|\, sm=0 \text{ for some } s \in S\}.\]
That this is a submodule follows from the Ore condition.
The next definition is based on~\cite[Page~185]{Ranicki:1981-1}.  See also \cite[Proposition~3.4.1]{Ranicki:1981-1} for the precise relationship (in the case $A=B=0$) between symmetric complexes and linking pairings.

\begin{definition}[Twisted Blanchfield pairing]\label{defn:chain-level-blanchfield}
The twisted Blanchfield pairing of a symmetric triad
\[\wt{\Bl} \colon TH^2(V \otimes_{\Z[\pi]} C/B) \times TH^2(V \otimes_{\Z[\pi]} C/A) \to S^{-1}R/R\]
is defined as follows.  For $[x] \in TH^2(V \otimes_{\Z[\pi]} C/A)$ and $[y] \in TH^2(V \otimes_{\Z[\pi]} C/B)$, let
\[\wt{\Bl}([y],[x]) = \smfrac{1}{s} \ol{z(\Phi_0(x))}\]
where $x \in V \otimes_{\Z[\pi]} (C/A)^2$, $y \in V \otimes_{\Z[\pi]} (C/B)^2$, $z \in V \otimes_{\Z[\pi]} (C/B)^1$, $\partial^*(z) = sy$ for some  $s \in S$.  %, and \[\Phi_0' \colon V \otimes_{\Z[\pi]} (C/B)_r \to V \otimes_{\Z[\pi]} (C/A)^{3-r}\] is a chain homotopy inverse to $\Phi_0$.
To evaluate $z$ on $\Phi_0(x)$ use the image of $z$ under the isomorphisms
\[V \otimes_{\Z[\pi]} (C/B)^1 \xrightarrow{\Theta \otimes \Id} V^* \otimes_{\zpx} (C/B)_1^* \xrightarrow{\text{Lemma }\ref{lem:isoduals}} (V \otimes (C/B)_1)^*\]

For a symmetric \emph{Poincar\'{e}} triad, we can also define the Blanchfield pairing on homology:
\[\Bl \colon TH_1(V \otimes_{\Z[\pi]} C/A) \times TH_1(V \otimes_{\Z[\pi]} C/B) \to S^{-1}R/R\]
via $\Bl([u],[v]) := \wt{\Bl}([\Phi_0]^{-1}([u]),[\Phi_0]^{-1}([v]))$, where $[\Phi_0]$ is the induced map on homology.
\end{definition}

Using the identification of a finitely generated free module with its double dual:
\[M \xrightarrow{\simeq} M^{**};\; x \mapsto (f \mapsto \ol{f(x)})\]
one can also write $\ol{z(\Phi_0(x))}$ as $\Phi_0(x)(z)$, if one prefers to hide the involution in the definition.

\begin{proposition}[Well-defined and linearity]\label{Prop:chainlevelBlanchfield}
The twisted Blanchfield pairing is well-defined and sesquilinear:
\[\wt{\Bl}(q[u],p[v]) = q\wt{\Bl}([u],[v]) \ol{p}.\]
\end{proposition}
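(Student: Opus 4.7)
The key technical ingredient I would establish first is a vanishing lemma: if $w\in V\otimes_{\Z[\pi]}(C/B)^1$ is a cocycle and $\zeta\in V\otimes_{\Z[\pi]}(C/B)_1$ is a cycle whose homology class is $S$-torsion, then $w(\zeta)=0$ in $R$. The proof is a one-liner: choose $t\in S$ with $t\zeta=\partial u$; then $R$-linearity of evaluation gives $t\cdot w(\zeta)=w(t\zeta)=w(\partial u)=(\partial^*w)(u)=0$, and since $S$ contains no zero divisors we conclude $w(\zeta)=0$. I would apply this throughout with $\zeta=\Phi_0(x)$, noting that the $S$-torsion of $[x]$ together with the $R$-linearity of $q_B\Phi_0q_A^*$ forces $[\Phi_0(x)]\in H_1(V\otimes_{\Z[\pi]}(C/B))$ to be $S$-torsion.

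With the vanishing lemma in hand, well-definedness reduces to four short bookkeeping arguments. For the pair $(s,z)$: if $\partial^*(z_i)=s_iy$ for $i=1,2$, the Ore condition supplies $a_1,a_2\in R$ with $a_1s_1=a_2s_2\in S$, making $a_1z_1-a_2z_2$ a cocycle; the vanishing lemma together with the identity $(aw)(u)=w(u)\overline{a}$ and conjugation yields $a_1\overline{z_1(\Phi_0 x)}=a_2\overline{z_2(\Phi_0 x)}$, and rewriting the two candidate values with common denominator $a_is_i$ in $S^{-1}R$ shows them equal. For the representative of $[x]$: the chain-map property of $q_B\Phi_0q_A^*$ (from Section~\ref{section:symm-Pe-triads}) gives $\Phi_0(\partial^*x')=\partial(\Phi_0 x')$, whence the change is $\tfrac{1}{s}\overline{(\partial^*z)(\Phi_0 x')}=\tfrac{1}{s}\overline{(sy)(\Phi_0 x')}=\overline{y(\Phi_0 x')}\in R$. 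Replacing $y$ by $y+\partial^*y'$ and the primitive by $z+sy'$ gives the analogous change $\overline{y'(\Phi_0 x)}\in R$.

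For sesquilinearity, $R$-linearity gives $\Phi_0(px)=p\Phi_0(x)$. Applying the Ore condition to $(q,s)$ provides $s'\in S$, $q'\in R$ with $s'q=q's$, so $\partial^*(q'z)=q'sy=s'(qy)$ and $(s',q'z)$ is a valid primitive for the pair $(q[y],p[x])$. Substituting,
\[\wt{\Bl}(q[y],p[x])=\tfrac{1}{s'}\,\overline{(q'z)(p\,\Phi_0(x))}=\tfrac{1}{s'}\,q'\,\overline{z(\Phi_0(x))}\,\overline{p},\]
using $(q'z)(u)=z(u)\overline{q'}$, left-$R$-linearity $z(pu)=p\cdot z(u)$, and $\overline{abc}=\overline{c}\,\overline{b}\,\overline{a}$. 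The Ore relation $s'q=q's$ rearranges to $(s')^{-1}q'=qs^{-1}$ in $S^{-1}R$, so the right-hand side becomes $q\,\wt{\Bl}([y],[x])\,\overline{p}$, which is the desired sesquilinearity.

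The main obstacle, in my view, is not any deep insight but rather the careful bookkeeping between the left and right $R$-module structures: in particular the identity $(rz)(u)=z(u)\overline{r}$, expressing how the left action on cochains in $V\otimes_{\Z[\pi]}(C/B)^*$ manifests as right multiplication by $\overline{r}$ after evaluation via $\Theta$ and Lemma~\ref{lem:isoduals}, together with the rules for conjugation and Ore-style left/right fractions. Once these conversions are internalised, the vanishing lemma makes the rest of the argument formal.
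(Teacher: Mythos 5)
Your argument is correct, and in substance it follows the same route as the paper's proof: the same chain-level manipulations, the same use of the Ore condition, and the same module-structure bookkeeping, in particular the conversion $(az)(u)=z(u)\ol{a}$ coming from $\Theta$ and Lemma~\ref{lem:isoduals}, and the fact that $[\Phi_0(x)]$ is $S$-torsion. The one genuine (if modest) difference is in the independence of the choice of $(s,z)$: the paper chooses a chain $w$ with $\partial w = r\Phi_0(x)$ and transfers the evaluation onto $w$ via $\partial^*z = sy$, whereas you isolate a vanishing lemma (a cocycle annihilates any cycle representing an $S$-torsion class, since $S$ contains no zero divisors) and apply it to the cocycle $a_1z_1-a_2z_2$, where $a_1s_1=a_2s_2\in S$ is a common left multiple supplied by the Ore condition. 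Both mechanisms rest on exactly the same ingredients, so neither gains generality; your packaging is slightly more modular, since the same vanishing lemma could be quoted again elsewhere (for instance in the hermitian computation of Proposition~\ref{prop:Hermitian}), while the paper's is a single direct calculation. Your treatment of the choice of cocycle representatives and of sesquilinearity coincides with the paper's, merely merging the two linearity checks into one computation, and, like the paper, you leave additivity as a routine verification.
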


Under additional assumptions the twisted Blanchfield pairing is also nonsingular and hermitian.  We will investigate these two properties in Sections~\ref{section:non-singular} and \ref{section:hermitian}.
All properties will be proven for the cohomology Blanchfield pairing $\wt{\Bl}$.  This implies the same properties for $\Bl$ in the case that we have a Poincar\'{e} triad.  Recall that a symmetric triad arising from a $3$-manifold is Poincar\'e, as in Proposition~\ref{prop:triad-from-manifold}.

\begin{proof}[Proof of Proposition~\ref{Prop:chainlevelBlanchfield}]
First we consider the dependence on the choice of chain representative.  Suppose instead we compute $\wt{\Bl}$ using $x' = x + \partial^*u$ and $y'=y+\partial^*v$, where $u \in V \otimes_{\zpx} (C/A)^1$ and $v \in V \otimes_{\zpx} (C/B)^1$.  Then $\Phi_0(x') = \Phi_0(x+\partial^*(u)) = \Phi_0(x) + \partial \Phi_0(u)$.  Also $sy' = sy + \partial^*sv = \partial^*(z+sv)$.
Then we have
\begin{align*}
 \wt{\Bl}(y',x') =&\smfrac{1}{s}\ol{(z+sv)(\Phi_0(x) + \partial\Phi_0(u))} \\
   =&  \smfrac{1}{s}\ol{z(\Phi_0(x))} + \smfrac{1}{s}\ol{(sv)(\Phi_0(x) + \partial\Phi_0(u))} + \smfrac{1}{s}\ol{z(\partial\Phi_0(u))} \\
   =&  \smfrac{1}{s}\ol{z(\Phi_0(x))} + \smfrac{1}{s}\ol{v(\Phi_0(x) + \partial\Phi_0(u))\ol{s}} + \smfrac{1}{s}\ol{(\partial^*z)(\Phi_0(u))} \\
   =&  \smfrac{1}{s}\ol{z(\Phi_0(x))} + \smfrac{1}{s}\ol{\ol{s}}\ol{v(\Phi_0(x) + \partial\Phi_0(u))} + \smfrac{1}{s}\ol{(sy)(\Phi_0(u))} \\
  =&  \smfrac{1}{s}\ol{z(\Phi_0(x))} + \smfrac{s}{s}\ol{v(\Phi_0(x) + \partial\Phi_0(u))} + \smfrac{s}{s}\ol{y(\Phi_0(u))} \\
  =&  \smfrac{1}{s}\ol{z(\Phi_0(x))} + \ol{v(\Phi_0(x) + \partial\Phi_0(u))} + \ol{y(\Phi_0(u))} \\
  =&  \smfrac{1}{s}\ol{z(\Phi_0(x))} = \wt{\Bl}(y,x)
\end{align*}
where the penultimate equality follows since all but the first terms lie in $R$, and the Blanchfield pairing is valued in $S^{-1}R/R$.

To show that the definition is independent of the choice of $s$ and $z$, suppose that there are also $s' \in S, z' \in V \otimes_{\Z[\pi]} (C/B)^1$ such that $\partial^*(z') = s'y$.
Since $\Phi_0(x)$ is a torsion element of $H_1(V \otimes_{\Z[\pi]} C/B)$, there is a chain $w \in V \otimes_{\Z[\pi]} (C/B)_2$ and an $r \in S$ such that $\partial(w) = r\Phi_0(x)$.  Then:
\begin{eqnarray*} \smfrac{1}{s}\ol{z(\Phi_0(x))} - \smfrac{1}{s'}\ol{z'(\Phi_0(x))}
& = &  \left(\smfrac{1}{s} \ol{z(\Phi_0(x))} - \smfrac{1}{s'} \ol{z'(\Phi_0(x))}\right)\smfrac{\ol{r}}{\ol{r}} \\
 =  \left(\smfrac{1}{s} \ol{z(\Phi_0(x))}\,\ol{r} - \smfrac{1}{s'} \ol{z'(\Phi_0(x))}\,\ol{r}\right)\smfrac{1}{\ol{r}}
 &= & \left(\smfrac{1}{s} \ol{r(z(\Phi_0(x)))} - \smfrac{1}{s'} \ol{r(z'(\Phi_0(x)))}\right)\smfrac{1}{\ol{r}}\\
 =  \left(\smfrac{1}{s} \ol{z(r\Phi_0(x))} - \smfrac{1}{s'} \ol{z'(r\Phi_0(x))}\right)\smfrac{1}{\ol{r}}
& = & \left(\smfrac{1}{s} \ol{z(\partial w)} - \smfrac{1}{s'} \ol{z'(\partial w)}\right)\smfrac{1}{\ol{r}}\\
 =  \left(\smfrac{1}{s} \ol{\partial^*(z)(w)} - \smfrac{1}{s'} \ol{\partial^*(z')(w)}\right)\smfrac{1}{\ol{r}}
& = & \left(\smfrac{1}{s} \ol{(sy)(w)} - \smfrac{1}{s'} \ol{(s'y)(w)}\right)\smfrac{1}{\ol{r}}\\
 =  \left(\smfrac{1}{s} \ol{(y)(w)\ol{s}} - \smfrac{1}{s'} \ol{(y)(w)\ol{s'}}\right)\smfrac{1}{\ol{r}}
& = & \left(\smfrac{1}{s} \ol{\ol{s}}\ol{(y)(w)} - \smfrac{1}{s'} \ol{\ol{s'}}\ol{(y)(w)}\right)\smfrac{1}{\ol{r}}\\
 =  \left(\ol{y(w)} - \ol{y(w)}\right) \smfrac{1}{\ol{r}} &=& 0.\end{eqnarray*}
Furthermore, for $p \in R$:
\[\wt{\Bl}(y,px) = \smfrac{1}{s}\ol{(z)(\Phi_0(px))} = \smfrac{1}{s} \ol{p z(\Phi_0(x))} = \smfrac{1}{s} \ol{z(\Phi_0(x))} \ol{p} = \wt{\Bl}(y,x)\ol{p},\]
so that $\wt{\Bl}$ is conjugate-linear in the second variable.
Then for $q \in R$, suppose as usual that $\partial^*(z) = sy$.  By the Ore condition, there exist $q' \in R$ and $s' \in S$ such that $q's=s'q$.  Then
$\partial^*(q'z) = q'sy = s'(qy)$.  Note that $q's = s'q$ implies that $\smfrac{1}{s'}q' = q \smfrac{1}{s}$.  Therefore
\begin{eqnarray*}
  \wt{\Bl}(qy,x) &=& \smfrac{1}{s'} \ol{(q'z)(\Phi_0(x))} =  \smfrac{1}{s'} \ol{z(\Phi_0(x)) \ol{q'}} = \smfrac{1}{s'} q' \ol{z(\Phi_0(x))} \\ &=& q\smfrac{1}{s} \ol{z(\Phi_0(x))} = q\wt{\Bl}(y,x).
\end{eqnarray*}
Linearity under addition is a straightforward verification and is left to the reader.
\end{proof}

This completes the proof of Theorem~\ref{theorem:main}~(\ref{item:mainthm-sesqui}) from the introduction.  It also proves Theorem~\ref{theorem:main}~(\ref{item:mainthm-defined}), except for the claim that the pairing of Definition~\ref{defn:chain-level-blanchfield} coincides with the homological definition of the pairing given in the introduction.

\subsection{Nonsingular Blanchfield pairings}\label{section:non-singular}

The twisted Blanchfield pairing of a symmetric Poincar\'{e} triad is often nonsingular.
The next proposition proves Theorem~\ref{theorem:main}~(\ref{item:mainthm-nonsingular}).

\begin{proposition}[Non-singularity]\label{prop:non-singular}
Suppose that \[\Ext^1_R(H_0(V \otimes_{\zpx}C/B),S^{-1}R/R) \cong 0.\]
Then the twisted Blanchfield pairing of a symmetric Poincar\'{e} triad is nonsingular; that is, the adjoint map $$\wt{\Bl} \colon TH^2(V \otimes_{\Z[\pi]} C/A) \to \Hom_R(TH^2(V \otimes_{\Z[\pi]} C/B),S^{-1}R/R)^t$$ is an isomorphism.
\end{proposition}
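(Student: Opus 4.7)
The plan is to factor the adjoint of $\wt{\Bl}$ as a composition of two isomorphisms, the first coming from Poincar\'e duality on the triad and the second from a universal-coefficient/Bockstein identification. By direct inspection of Definition~\ref{defn:chain-level-blanchfield}, the value $\wt{\Bl}([y],[x])$ depends on $x$ only through $\Phi_0(x)\in V\otimes_{\zpx}(C/B)_1$, so the adjoint factors as
\[ TH^2(V\otimes C/A) \xrightarrow{\Phi_0} TH_1(V\otimes C/B) \xrightarrow{\mu} \Hom_R\bigl(TH^2(V\otimes C/B),S^{-1}R/R\bigr)^t, \]
where $\mu(u)([y])=\tfrac{1}{s}\overline{z(u)}$ whenever $\partial^*(z)=sy$. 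Since the triad is Poincar\'e, $\Phi_0$ is a duality chain equivalence and its restriction to $S$-torsion is an isomorphism, so the problem reduces to showing that $\mu$ is an isomorphism.

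Set $M := V\otimes_{\zpx} C/B$. The Bockstein long exact sequence associated with $0\to R\to S^{-1}R\to S^{-1}R/R\to 0$ yields a surjection $\beta \colon H^1(M;S^{-1}R/R)\to TH^2(M;R)$ whose kernel is the image of $H^1(M;S^{-1}R)\to H^1(M;S^{-1}R/R)$, using that $H^2(M;S^{-1}R)\cong S^{-1}H^2(M;R)$ is $S$-torsion-free. The universal coefficient sequence for $H^1(M;S^{-1}R/R)$, combined with the hypothesis $\Ext^1_R(H_0(M),S^{-1}R/R)=0$, provides an isomorphism $H^1(M;S^{-1}R/R) \toiso \Hom_R(H_1(M),S^{-1}R/R)$. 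Composing these identifications presents $TH^2(M;R)$ as the quotient $\Hom_R(H_1(M),S^{-1}R/R)/I$, where $I$ is the image of $\Hom_R(H_1(M),S^{-1}R)$; moreover any element of $I$ vanishes on $TH_1(M)$, because a lift $\tilde\phi\colon H_1(M)\to S^{-1}R$ of $\phi$ satisfies $s\tilde\phi(u)=\tilde\phi(su)=0$ and hence $\tilde\phi(u)=0$ when $su=0$ with $s\in S$. The map $\mu$ is therefore well-defined and is identified with the natural evaluation $u\mapsto ([\phi]\mapsto \phi(u))$ from $TH_1(M)$ to $\Hom_R(\Hom_R(H_1(M),S^{-1}R/R)/I,\,S^{-1}R/R)^t$.

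The main obstacle, and the technical heart of the argument, is verifying that this evaluation map is both injective and surjective. I would accomplish this by running the universal-coefficient spectral sequence for $M$ with $R$- and $S^{-1}R$-coefficients in parallel, using the hypothesis on $\Ext^1_R(H_0(M),S^{-1}R/R)$ to collapse the relevant portions of the $E_2$-page, and then applying the five lemma to the Bockstein long exact sequence and its analogue obtained by dualising against $S^{-1}R/R$. This step is precisely where the $\Ext^1$ vanishing earns its keep: it is needed not just for the UCT isomorphism used above, but to ensure that $\mu$ meshes with the Bockstein identification tightly enough to be bijective even when $H_1(M)$ and its torsion subgroup are not reflexive over $S^{-1}R/R$ in any naive sense.
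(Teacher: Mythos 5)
Your setup retraces the paper's own argument for Proposition~\ref{prop:non-singular} almost step for step: you factor the adjoint through $\Phi_0$ (legitimate, since the triad is Poincar\'e and $\wt{\Bl}$ of Definition~\ref{defn:chain-level-blanchfield} depends on $x$ only through $\Phi_0(x)$), you use the Bockstein sequence of $0 \to R \to S^{-1}R \to S^{-1}R/R \to 0$ together with flatness of the Ore localisation to reach $S$-torsion classes in $H^2$, you invoke universal coefficients via the $\Ext^1$ hypothesis, and you note that functionals lifting to $S^{-1}R$-coefficients vanish on $S$-torsion. The gap is what comes after: the bijectivity of your evaluation map $\mu$ is precisely the conclusion of the proposition, and you do not prove it. You only announce that you ``would'' establish it by running the universal coefficient spectral sequence with two coefficient systems in parallel and applying the five lemma to the Bockstein sequence ``and its analogue obtained by dualising against $S^{-1}R/R$''. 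No diagram is specified, and the proposed dualised sequence need not even be exact: $S^{-1}R/R$ is not assumed to be an injective $R$-module here (the hypothesis is only $\Ext^1_R(H_0(V \otimes_{\zpx} C/B),S^{-1}R/R)=0$), so applying $\Hom_R(-,S^{-1}R/R)$ to the Bockstein long exact sequence does not automatically produce something the five lemma can be applied to. As written, the technical heart of the proof is missing.

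The paper sidesteps this by running the identifications in the opposite direction, so that no double-dual or reflexivity-type statement about $TH_1$ has to be verified. Given $[y] \in TH^2(V \otimes_{\zpx} C/B)$, one chooses a Bockstein lift $[z] \in H^1(V\otimes_{\zpx}C/B;S^{-1}R/R)$, pushes it through the universal coefficient isomorphism $H^1(\,\cdot\,;S^{-1}R/R) \toiso \Hom_R(H_1(\,\cdot\,;R),S^{-1}R/R)^t$ (this is where the $\Ext^1$ hypothesis enters) and then through Poincar\'e duality, and restricts the resulting functional to $TH^2(V \otimes_{\zpx} C/A)$; the indeterminacy in the lift lies in the image of $H^1(\,\cdot\,;S^{-1}R)$ and therefore dies on torsion, exactly as you observed. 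This produces a map $\Gamma \colon TH^2(V \otimes_{\zpx} C/B) \to \Hom_R(TH^2(V \otimes_{\zpx} C/A),S^{-1}R/R)^t$ built from the Bockstein identification of the torsion, the universal coefficient isomorphism and the duality isomorphism, which is checked to be an $R$-module homomorphism and then identified, via the chain-level description of $\Phi_0$, with the adjoint of $\wt{\Bl}$; nonsingularity follows from the constituent isomorphisms. To repair your write-up you should replace your final paragraph by this construction, or else prove directly (rather than assert) that your evaluation map $\mu$ is bijective --- for instance by showing that restriction to the torsion submodule identifies $\Hom_R(H_1,S^{-1}R/R)$ modulo the image of $\Hom_R(H_1,S^{-1}R)$ with $\Hom_R(TH_1,S^{-1}R/R)^t$ --- instead of appealing to an unspecified spectral sequence and five lemma argument.
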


Recall that for a left $R$-module $P$ and an $(R,R')$-bimodule $Q$, as in Definition~\ref{defn:dual-complex}, we have that $\Hom_{R}(P,Q)$ is naturally a right $R'$-module, but we convert it to a left $R'$-module $\Hom_{R}(P,Q)^t$  using the involution of $R'$.  %This convention explains why there is no bar appearing on modules of type $\Hom_R(-,S^{-1}R/R)$ in this section.

As remarked in the introduction, if $R$ is a principal ideal domain and $S$ consists of the nonzero elements of $R$, then since $S^{-1}R/R$ is divisible it is an injective $R$-module (see \cite[I.6.10]{Stenstrom}).  Therefore $\Ext_R^i(M,S^{-1}R/R) = 0$ for any $i>0$, as can be seen by computing $\Ext$ using the injective resolution of length zero of the second variable.  Thus the pairing of a Poincar\'{e} triad is nonsingular.

\begin{proof}[Proof of Proposition~\ref{prop:non-singular}]
For an $(R,R)$-bimodule $M$, for the duration of this proof we write \[H^i(C/B;M) := H_i(M \otimes_R (V \otimes_{\zpx} C/B)^*).\]
The Bockstein homomorphism $\beta$ associated to the short exact sequence
\[0 \to R \to S^{-1}R \to S^{-1}R/R \to 0\]
yields a long exact sequence
\[H^1(C/B;S^{-1}R) \xrightarrow{\gamma} H^1(C/B;S^{-1}R/R) \xrightarrow{\beta} H^2(C/B;R) \to H^2(C/B;S^{-1}R).\]
Since $[y] \in H^2(C/B;R)$ is $S$-torsion, $[y] \mapsto 0 \in H^2(C/B;S^{-1}R)$, so there is a $[z] \in H^1(C/B;S^{-1}R/R)$ such that $\beta(z) =y$.  The chain $z$ is determined up to addition of an element $\gamma(w)$ where $w \in H^1(C/B;S^{-1}R)$.    The $\Ext^1$ vanishing hypothesis implies, by universal coefficients, that we have an isomorphism
\[H^1(C/B;S^{-1}R/R) \toiso \Hom_R(H_1(C/B;R),S^{-1}R/R)^t.\]
Then Poincar\'{e} duality induces an isomorphism \[\Hom_R(H_1(C/B;R);S^{-1}R/R)^t \toiso \Hom_R(H^2(C/A;R),S^{-1}R/R)^t.\]
Denote the image of $z + \gamma(w)$ in $\Hom_R(H^2(C/A;R),S^{-1}R/R)^t$ by $z^* + \gamma(w)^*$.  For $x \in TH^2(C/A;R)$, we have that $\gamma(w)^*(x) =0$ since $\gamma(w)^*$ originates from $\Hom_R(H_1(C/B;R),S^{-1}R)^t$, and homomorphisms to $S^{-1}R$ vanish on $S$-torsion, since $S$ contains no zero divisors.
Therefore the indeterminacy in $z$ has no effect, and the combination of the Bockstein, universal coefficients and Poincar\'{e} duality induce a bijection:
\[\Gamma \colon TH^2(C/B;R) \to \Hom_R(TH^2(C/A;R),S^{-1}R/R)^t.\]
It is straightforward to check that $\Gamma$ is a module homomorphism, and therefore is an isomorphism.
Inspection of the sequence of isomorphisms, together with the fact that the symmetric structure map $\Phi_0$ is the chain level map inducing the Poincar\'e duality isomorphism on homology, yields that the isomorphism $\Gamma$ is adjoint to~$\wt{\Bl}$.  This completes the proof of Proposition~\ref{prop:non-singular}.
\end{proof}

The sequence of isomorphisms used in the above proof leads to the definition of the twisted Blanchfield pairing given in the introduction, by applying Poincar\'{e} duality at the beginning and at the end, as in Definition~\ref{defn:chain-level-blanchfield}.
This completes the proof of Theorem~\ref{theorem:main}~(\ref{item:mainthm-defined}) from the introduction.

\subsection{Hermitian triads and hermitian Blanchfield pairings}\label{section:hermitian}

\begin{definition}[Hermitian triad]\label{defn:hermitian-triad}
Suppose that a symmetric triad is chain equivalent to a triad (the maps of the triad are still required to be split injections) for which there is a chain equivalence $\sigma\colon C/B \xrightarrow{\simeq} C/A$, such that the following diagram commutes up to homotopy:
\[\xymatrix @R-0.5cm {& C \ar[dl]_-{q_B} \ar[dr]^-{q_A} & \\  C/B \ar[rr]^-{\simeq}_-{\sigma} && C/A.
  }\]
Then we say that the symmetric triad is \emph{hermitian}.
\end{definition}

It is worth noting that in order to check this criterion in practice, for a $3$-manifold, one has to take extra care with basepoints and basing paths, due to their appearance in the preamble to Proposition~\ref{prop:triad-from-manifold}.  See the proof of Proposition~\ref{prop:boundary-split} for an example of this principle in practice.

\begin{lemma}\label{lemma:hermitian-chain-homotopy}
For a hermitian triad, the homomorphisms
\[(C/A)^{3-r} \xrightarrow{q_A^*} C^{3-r} \xrightarrow{\Phi_0} C_{r} \xrightarrow{q_B} (C/B)_r \xrightarrow{\sigma} (C/A)_r\]
and
\[(C/A)^{3-r} \xrightarrow{\sigma^*} (C/B)^{3-r} \xrightarrow{q_B^*} C^{3-r} \xrightarrow{\Phi^*_0} C_{r} \xrightarrow{q_A} (C/A)_r.\]
determine chain homotopic chain maps $(C/A)^{3-*} \to (C/A)_*$.
\end{lemma}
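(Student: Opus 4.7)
The plan is to prove both assertions---that each composition is a chain map, and that the two are chain homotopic---by expanding the $s=0$ and $s=1$ equations of the symmetric triad structure from Definition~\ref{Defn:symmPoincaretriad}, using the union construction of Definition~\ref{Defn:unionconstruction}, and combining with the hermitian chain homotopy of Definition~\ref{defn:hermitian-triad}. Because taking the $R$-dual reverses the order of composition and dualises each factor, the second composition $q_A \Phi_0^* q_B^* \sigma^*$ is precisely the $R$-dual of the first composition $\sigma q_B \Phi_0 q_A^*$ under the double-dual identification. The lemma thus becomes the statement that this chain map is chain-homotopic to its own adjoint, which is the algebraic incarnation of ``hermitian''.

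For the chain-map claim, compute the defect $d \circ (q_B \Phi_0 q_A^*) - (q_B \Phi_0 q_A^*) \circ \delta$. The $s=0$ pair equation $d_{\otimes} \Phi_0 + e \psi_0 e^* = 0$ from Definition~\ref{poincarepaireqns} reduces this defect to $-q_B e \psi_0 e^* q_A^*$. Since we have arranged $g = 0$, we have $e = (i_A, 0, -i_B)$, so $q_B e = (q_B i_A, 0, 0)$ lands only in the ``$A$'' summand of $A \oplus D[-1] \oplus B$, while $e^* q_A^* = (q_A e)^* = (0, 0, -(q_A i_B)^*)$ lands only in the ``$B$'' summand. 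The composition therefore extracts the $(B, A)$-matrix entry of $\psi_0 = (\varphi^A \cup_\chi -\varphi^B)_0$, which vanishes by direct inspection of the union-construction formula in Definition~\ref{Defn:unionconstruction}. Postcomposing with the chain map $\sigma$ preserves the chain-map property; interchanging the roles of $A$ and $B$ and dualising handles the second composition.

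For the chain homotopy, sandwich the $s=1$ pair equation $\Phi_0 - T\Phi_0 = d_{\otimes} \Phi_1 + e \psi_1 e^*$ between $q_B$ and $q_A^*$. The term $q_B e \psi_1 e^* q_A^*$ vanishes by the same matrix-entry argument, so $q_B \Phi_1 q_A^*$ provides a chain homotopy in $\Hom_R((C/A)^{3-*}, (C/B)_*)$ from $q_B \Phi_0 q_A^*$ to $q_B T\Phi_0 q_A^*$. Under the slant and double-dual identifications the target is the natural $R$-dual of $q_B \Phi_0 q_A^*$, up to the standard sign $(-1)^{r(3-r)}$. Postcomposing with $\sigma$ and then using the hermitian chain homotopy $\sigma q_B - q_A = dH + Hd$ together with its dual $q_B^* \sigma^* - q_A^* = \delta H^* + H^* \delta$, the plan is to assemble an explicit degree $+1$ map $K \colon (C/A)^{3-*} \to (C/A)_*$ of the shape $\sigma q_B \Phi_1 q_A^* \pm H \Phi_0 q_A^* \pm q_A \Phi_0^* H^*$ and then check by direct expansion that $dK + Kd = \sigma q_B \Phi_0 q_A^* - q_A \Phi_0^* q_B^* \sigma^*$.

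The hard part will be the bookkeeping in the last step: none of $\Phi_0$, $\Phi_1$, or $H$ is individually a chain map, so chain homotopies cannot be composed naively. Verifying the defect equation for $K$ simultaneously uses the $s=0$ equation (to absorb the non-chain-map defect of $\Phi_0$ in cross-terms involving $dH$), the $s=1$ equation (for the main cancellation between $\Phi_0$ and $T\Phi_0$), the hermitian homotopy equations, and the vanishing of the relevant $(A,B)$ and $(B,A)$ matrix entries of $\psi_0$ and $\psi_1$ to kill residual union-construction boundary terms, with careful attention to the $(-1)^{pq}$ signs from $T$.
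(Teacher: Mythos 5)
Your proposal follows essentially the same route as the paper's proof: the $s=0$ and $s=1$ symmetric pair equations sandwiched as $q_B(\,\cdot\,)q_A^*$, the vanishing of the union-structure cross terms because $e^*q_A^*$ lands only in the $B$-summand while $q_Be$ only records the $A$-summand (so one extracts the zero matrix entry of $(\varphi^A\cup_\chi-\varphi^B)_0$ and $(\varphi^A\cup_\chi-\varphi^B)_1$), and a final homotopy assembled from $\sigma q_B\Phi_1 q_A^*$ plus the hermitian homotopy (your $H$, the paper's $k$) composed with the symmetric structure and with $q_A^*$, $q_A$ on either side. The only adjustment the promised direct expansion will force is that the middle term of your candidate homotopy should be $k\Phi_0^*q_A^*$ rather than $k\Phi_0 q_A^*$ (and note that $q_BT\Phi_0q_A^*$ equals $\pm\, q_B\Phi_0^*q_A^*$, not literally the dual of $q_B\Phi_0q_A^*$, which would interchange $q_A$ and $q_B$); with that choice the sign bookkeeping closes exactly as in the paper.
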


We remark that the maps $\Phi_0, \Phi_0^*$ are not chain maps by themselves, only the composition $q_B \Phi_0 q_A^*$ and its dual $q_A \Phi_0^* q_B^*$ are chain maps, as we will show in the proof of the lemma.
The proof that the two maps are chain homotopic uses a combination of the chain homotopy witnessing the homotopy commutativity of the diagram from Definition~\ref{defn:hermitian-triad}, and the higher symmetric structure maps $\Phi_1 \colon C^{3-r} \to C_{r+1}$.

\begin{proof}
We begin by showing that the first composition is a chain map.
  In this proof let $\p\Phi := \varphi^A \cup_{\chi} \varphi^B$.  The symmetric pair equations tell us that
  \[\p_C\Phi_0 + (-1)^r\Phi_0\delta_C = e \p\Phi_0 e^* \colon C^{2-r} \to C_{r}.\]
 Here \[e = \begin{pmatrix}
i_A & 0 & -i_B
\end{pmatrix} \colon A_{r} \oplus D_{r-1} \oplus B_r \to C_r.\]  The maps $\sigma$, $q_A$ and $q_B$ are chain maps so we obtain:
\[\sigma q_B(\p_C\Phi_0 + (-1)^r\Phi_0\delta_C)q_A^* = \p_C \sigma q_B\Phi_0 q_A^* + (-1)^r\sigma q_B\Phi_0 q_A^*\delta_C. \]
In order to show that $\sigma q_B\Phi_0 q_A^*$ is a chain map, it therefore suffices to show that $\sigma q_B e (\p\Phi)_0 e^* q_A^*$ vanishes.

\begin{claim*}
We have $q_B e (\p\Phi)_0 e^* q_A^* =0$.
\end{claim*}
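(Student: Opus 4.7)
The plan is to exploit the identities $q_A \circ i_A = 0$ and $q_B \circ i_B = 0$, which are immediate from the fact that $q_A$, $q_B$ are the quotient maps by $A$ and $B$ while $i_A$, $i_B$ are the corresponding inclusions. Combined with the explicit matrix form of the union symmetric structure $\partial\Phi = \varphi^A \cup_\chi -\varphi^B$ recorded in Definition~\ref{Defn:unionconstruction}, this should collapse the triple composition to a single block of $(\partial\Phi)_0$ that one reads off to be zero.

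In more detail, recall that $g = 0$ under our standing assumption on the triad, so that $e = (i_A, \, 0, \, -i_B) : A_r \oplus D_{r-1} \oplus B_r \to C_r$. Post-composing with $q_B$ kills the third entry and post-composing with $q_A$ kills the first entry, giving
\[q_B \circ e = (q_B i_A, \, 0, \, 0) \quad \text{and} \quad q_A \circ e = (0, \, 0, \, -q_A i_B).\]
Dualising the second identity yields $e^* \circ q_A^* = (q_A e)^* = (0, \, 0, \, -(q_A i_B)^*)^T$. Multiplying out, the composition $q_B \circ e \circ (\partial\Phi)_0 \circ e^* \circ q_A^*$ reduces to $-(q_B i_A) \circ M \circ (q_A i_B)^*$, where $M$ is the $(1,3)$ block of $(\partial\Phi)_0$ viewed as a matrix with respect to the decomposition $(A \cup_D B)_r = A_r \oplus D_{r-1} \oplus B_r$ of the boundary of the Poincar\'e pair $e : A \cup_D B \to C$.

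It then remains to observe that this $(1,3)$ block vanishes. By construction $(\partial\Phi)_0$ is the union symmetric structure $\varphi^A \cup_\chi -\varphi^B$, assembled exactly as in Definition~\ref{Defn:unionconstruction}; and the matrix displayed there for $\delta\varphi''_s$ is block lower-triangular, with its $(1,3)$ entry identically zero. Substituting this into the previous step gives $q_B e (\partial\Phi)_0 e^* q_A^* = 0$, proving the claim. The one step requiring care is the translation of the indexing conventions of Definition~\ref{Defn:unionconstruction} into the triad indexing of Definition~\ref{Defn:symmPoincaretriad}, but this is purely notational and no deeper input is needed.
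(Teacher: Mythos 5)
Your proof is correct and follows essentially the same route as the paper: both arguments use the vanishing of the quotient-inclusion composites together with the explicit block form of the union symmetric structure from Definition~\ref{Defn:unionconstruction}. The only cosmetic difference is that you use $q_Ai_A=0$ and $q_Bi_B=0$ simultaneously to isolate the $(1,3)$ block, whereas the paper tracks an element through the third column of $(\p\Phi)_0$ and then applies $q_Bi_B=0$; the ingredients are identical.
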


Since $i_A \colon D \to A$ is a split injection, the image of $e^* q_A^* \colon (C/A)^{3-r} \to A^{3-r} \oplus D^{2-r} \oplus B^{3-r}$ is contained in $B^{3-r}$.  The symmetric structure $(\p\Phi)_0$ is given by
\[\bp \varphi^A_0 & 0 & 0 \\
(-1)^{r}\chi_0 i_A^* & 0 & 0 \\
0 & i_B \chi_0 & \varphi^B_0
\ep. \]
This maps $(0,0,b)^T \in A^{3-r} \oplus D^{2-r} \oplus B^{3-r}$ to \[(0,0,\varphi^B_0(b))^T \in A_{r-2} \oplus D_{r-2} \oplus B_{r-1},\] which is in the kernel of $C_{r-1} \to (C/B)_{r-1}$.  This completes the proof of the claim that $q_B e (\p\Phi)_0 e^* q_A^* =0$.

Thus $\sigma q_B\Phi_0 q_A^*$ is a chain map $(C/A)^{3-*} \to (C/A)_*$.  It follows that its dual is also such a chain map.

Now we need to show that the two compositions in the statement of the lemma are chain homotopic chain maps.  To begin, we recall the symmetric pair structure equation
\begin{equation}\label{equation:symm-pair-Phi-1}
  \Phi_0 - \Phi_0^*= (-1)^r\Phi_1\delta_C - \p_C\Phi_1 +e(\p\Phi)_1e^* \colon C^{3-r} \to C_r.
\end{equation}
Now $(\p\Phi)_1$ is given by
\[\bp \varphi^A_1 & 0 & 0 \\
(-1)^{r}\chi_1 i_A^* & (-1)^r\chi_0^* & 0 \\
0 & -i_B \chi_1 & \varphi^B_1
\ep. \]
Compared to $(\p\Phi)_0$, there is an extra nonzero term in the $(2,2)$ position.  Nevertheless, an analogous argument to that used for the previous claim can be applied to show that $q_B e (\p\Phi)_1 e^* q_A^* =0$.
Now we have:
\begin{align*}
  &\sigma q_B \Phi_0 q_A^* - q_A \Phi_0^* q_B^* \sigma^* \\
  =& \sigma q_B \big(\Phi_0^* + (-1)^r\Phi_1\delta_C - \p_C\Phi_1 +e(\p\Phi)_1e^*\big) q_A^* - q_A \Phi_0^* q_B^* \sigma^*\\
 =& \sigma q_B \Phi_0^*q_A^* + (-1)^r\sigma q_B\Phi_1\delta_C q_A^* - \sigma q_B\p_C\Phi_1 q_A^* +\sigma q_Be(\p\Phi)_1e^* q_A^* - q_A \Phi_0^* q_B^* \sigma^*\\
 =& \sigma q_B \Phi_0^*q_A^* + (-1)^r\sigma q_B\Phi_1 q_A^* \delta_{C/A} - \p_{C/A}\sigma q_B\Phi_1 q_A^*  - q_A \Phi_0^* q_B^* \sigma^*.
\end{align*}
The first equality follows from (\ref{equation:symm-pair-Phi-1}).  The second equality is just expanding the brackets.  The third equality uses that $q_B e (\p\Phi)_1 e^* q_A^* =0$ and that $q_A$, $q_B$ and $\sigma$ are chain maps, and so commute with boundary maps.  Now use that there is a chain homotopy $k \colon C_r \to (C/A)_{r+1}$ for which
\[\sigma q_B -q_A = k \p_{C} + \p_{C/A} k \colon C_r \to (C/A)_{r}.\]
Note that by dualising we also have \[q_B^* \sigma^* -q_A^* = (-1)^r \delta_{C}k^* + (-1)^{r+1} k^* \delta_{C/A} \colon (C/A)^{r} \to C^r.\] In addition, define $L:= \sigma q_B\Phi_1 q_A^*$.  Thus:
\begin{align*}
 & \sigma q_B \Phi_0^*q_A^* + (-1)^r\sigma q_B\Phi_1 q_A^* \delta_{C/A} - \p_{C/A}\sigma q_B\Phi_1 q_A^*  - q_A \Phi_0^* q_B^* \sigma^*\\
 =& \big(q_A + k \p_{C} + \p_{C/A} k\big)\Phi_0^*q_A^*  + (-1)^r L \delta_{C/A} - \p_{C/A} L \\  & - q_A \Phi_0^* \big(q_A^* + (-1)^{r}\delta_{C}k^* + (-1)^{r+1}k^* \delta_{C/A}\big)\\
 =& q_A\Phi_0^*q_A^* + k \p_{C}\Phi_0^*q_A^* + \p_{C/A} k \Phi_0^*q_A^*  + (-1)^r L \delta_{C/A} \\
  & - \p_{C/A} L  - q_A \Phi_0^* q_A^* - (-1)^{r} q_A \Phi_0^*\delta_{C}k^* - (-1)^{r+1} q_A \Phi_0^*k^* \delta_{C/A}\\
 =& k \p_{C}\Phi_0^*q_A^* + \p_{C/A} k \Phi_0^*q_A^*  + (-1)^r L \delta_{C/A} - \p_{C/A} L  \\ & + (-1)^{r+1} q_A \Phi_0^*\delta_{C}k^* + (-1)^{r} q_A \Phi_0^*k^* \delta_{C/A}\\
 =& (-1)^{r+1} k \Phi_0^*q_A^*\delta_{C/A} + \p_{C/A} k \Phi_0^*q_A^*  + (-1)^r L \delta_{C/A} - \p_{C/A} L \\ & - \p_{C/A}q_A \Phi_0^*k^* + (-1)^{r} q_A \Phi_0^*k^* \delta_{C/A}.
 \end{align*}
In the first equality we substitute $L$ for $\sigma q_B\Phi_1 q_A^*$ and we substitute $\sigma q_B = q_A+ k \p_{C} + \p_{C/A} k$ and $q_B^* \sigma^*= q_A^* + \delta_{C}k^* + k^* \delta_{C/A}$.  In the second equality we expand the brackets.  In the third equality we delete cancelling copies of $q_A \Phi_0^* q_A^*$.  In the fourth equality we use that $q_A$ and $\Phi_0^*$ are chain maps, to commute them with certain boundary and coboundary maps.
Now define $J = k \Phi_0^*q_A^*$ and $K = q_A \Phi_0^*k^*$.
Then making these substitutions and rearranging, we obtain:
\begin{align*}
& (-1)^{r+1} k \Phi_0^*q_A^*\delta_{C/A} + \p_{C/A} k \Phi_0^*q_A^*  + (-1)^r L \delta_{C/A} - \p_{C/A} L \\ & - \p_{C/A}q_A \Phi_0^*k^*
+ (-1)^{r} q_A \Phi_0^*k^* \delta_{C/A}\\
=&  (-1)^{r+1} J \delta_{C/A} + \p_{C/A} J  + (-1)^r L \delta_{C/A} - \p_{C/A} L - \p_{C/A} K +(-1)^{r} K \delta_{C/A}\\
=&   \p_{C/A}(J  - K - L) + (-1)^{r+1}( J  - K - L) \delta_{C/A}.
 \end{align*}
Thus $H=J-K+L$ is the chain homotopy we seek, with
\[\sigma q_B \Phi_0 q_A^* - q_A \Phi_0^* q_B^* \sigma^* = \p_{C/A}H + (-1)^{r+1}H \delta_{C/A}.\]
\end{proof}

Using $\sigma$, we can identify $TH^2(V \otimes_{\Z[\pi]} C/B)$ and $TH^2(V \otimes_{\Z[\pi]} C/A)$, and whence define a twisted Blanchfield pairing
\[\wt{\Bl}\colon TH^2(V \otimes_{\Z[\pi]} C/A) \times TH^2(V \otimes_{\Z[\pi]} C/A) \to S^{-1}R/R.\]
In the proof of Proposition~\ref{prop:Hermitian} below, we abuse but simplify notation by referring to the chain maps and the chain homotopy given to us by Lemma~\ref{lemma:hermitian-chain-homotopy} as $\Phi_1 \colon \Phi_0 \sim \Phi_0^*\colon (C/A)^{3-*} \to (C/A)_*$.

\begin{proposition}[Hermitian]\label{prop:Hermitian}
The twisted Blanchfield pairing on $TH^2(V \otimes_{\Z[\pi]} C/A)$ associated to a hermitian triad is a hermitian pairing: $\wt{\Bl}(x,y) = \ol{\wt{\Bl}(y,x)}$ for all $x,y \in TH^2(C/A;V)$.
\end{proposition}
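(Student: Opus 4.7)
The plan is to mimic the proof of hermiticity for the classical Blanchfield pairing of a closed $3$-manifold, where the argument combines the slant-map identity $u(\Phi_0 v) = \overline{v(\Phi_0^* u)}$ (extracted from Definition~\ref{defn:slant-map}) with the symmetric pair chain homotopy $\Phi_1 \colon \Phi_0 \simeq \Phi_0^*$ on $C$.  In the triad setting, the analogous ingredient is Lemma~\ref{lemma:hermitian-chain-homotopy}, which supplies a chain homotopy $H$ between $\mu := \sigma q_B \Phi_0 q_A^*$ and $\mu^\dagger := q_A \Phi_0^* q_B^* \sigma^*$; this $H$ plays the role of $\Phi_1$ after the triad correction terms $e(\partial\Phi)_s e^*$ have been killed by the quotient maps.

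Fix cocycle representatives $\tilde x, \tilde y \in V \otimes_{\Z[\pi]} (C/A)^2$ together with primitives $s_x, s_y \in S$ and $z_x, z_y \in V \otimes_{\Z[\pi]} (C/B)^1$ satisfying $\partial^* z_x = s_x \sigma^*(\tilde x)$ and $\partial^* z_y = s_y \sigma^*(\tilde y)$; these exist because $\sigma^*$ sends torsion classes to torsion classes.  Applying the slant-map identity with $u = q_B^* z_x \in V \otimes_{\Z[\pi]} C^1$ and $v = q_A^* \tilde y \in V \otimes_{\Z[\pi]} C^2$ converts the chain-level definition to
\[\wt{\Bl}(x,y) = \smfrac{1}{s_x} \overline{z_x(q_B \Phi_0 q_A^* \tilde y)} = \smfrac{1}{s_x}\tilde y(q_A \Phi_0^* q_B^* z_x) \in S^{-1}R.\]
Since $q_A \Phi_0^* q_B^*$ is a chain map (as the dual of $q_B \Phi_0 q_A^*$, shown to be a chain map inside the proof of Lemma~\ref{lemma:hermitian-chain-homotopy}), I would then compute
\[\partial(q_A \Phi_0^* q_B^* z_x) = \pm q_A \Phi_0^* q_B^*(\delta z_x) = \pm s_x\, q_A \Phi_0^* q_B^* \sigma^*(\tilde x) = \pm s_x \mu^\dagger(\tilde x).\]
By Lemma~\ref{lemma:hermitian-chain-homotopy} and $\delta \tilde x = 0$, $\mu^\dagger(\tilde x) = \mu(\tilde x) + \partial H(\tilde x)$, and therefore $\pm s_x \mu(\tilde x) = \partial\bigl(q_A \Phi_0^* q_B^* z_x \mp s_x H(\tilde x)\bigr)$ in $V \otimes_{\Z[\pi]} (C/A)_1$.

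The final step combines this boundary equation with $\partial^* z_y = s_y \sigma^*(\tilde y)$.  Choosing a chain homotopy inverse $\sigma' \colon C/A \to C/B$ of $\sigma$, I would lift the $2$-chain $q_A \Phi_0^* q_B^* z_x \mp s_x H(\tilde x) \in V \otimes_{\Z[\pi]} (C/A)_2$ to $V \otimes_{\Z[\pi]} (C/B)_2$ via $\sigma'$, evaluate the cochain $z_y$ on its boundary, and equate with the formula $z_y(\partial c) = \tilde y(\sigma c)\overline{s_y}$.  All the chain homotopy corrections from $\sigma\sigma' \simeq \Id_{C/A}$ and $\sigma'\sigma \simeq \Id_{C/B}$ produce terms in $R$, and the cocycle condition $\delta \tilde y = 0$ kills any remaining higher-order corrections; the net outcome is
\[\tilde y(q_A \Phi_0^* q_B^* z_x)\,\overline{s_y} = s_x\, z_y(q_B \Phi_0 q_A^* \tilde x) + r \quad \text{for some } r \in R.\]
Dividing by $s_x$ on the left and $\overline{s_y}$ on the right in $S^{-1}R$, and unwinding $\overline{\wt{\Bl}(y,x)} = z_y(q_B \Phi_0 q_A^* \tilde x)/\overline{s_y}$, yields $\wt{\Bl}(x,y) - \overline{\wt{\Bl}(y,x)} \in R$, as required.

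The main obstacle will be careful sign bookkeeping.  The signs $(-1)^{p(3-p)}$ from the slant transposition, $(-1)^{r+1}$ from the chain-map relation $\partial\Phi_0^{(r+1)} = (-1)^{r+1}\Phi_0^{(r)}\delta$ inherited from the symmetric structure, and the signs in Lemma~\ref{lemma:hermitian-chain-homotopy}, must combine to a net $+1$, reflecting that $n = 3$ is odd (for even $n$ one would instead obtain a skew-hermitian pairing).  A secondary technical point is the chain-level handling of $\sigma'$, where the correction terms involve the chain homotopies from Definition~\ref{defn:hermitian-triad} and from $\sigma\sigma' \simeq \Id$; these all produce terms in $R$ that are absorbed into the final equation, so do not affect the conclusion in $S^{-1}R/R$.
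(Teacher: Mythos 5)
Your route is essentially the paper's: identify the two sides via $\sigma$, use the chain homotopy of Lemma~\ref{lemma:hermitian-chain-homotopy} to trade $q_B\Phi_0q_A^*$ for its dual, move the coboundary across the chain map $q_A\Phi_0^*q_B^*$ using a torsion primitive, and close with the double-dual evaluation identity. The paper carries this out with less machinery: having transported the pairing to $C/A$ once and for all, both $\sigma q_B\Phi_0 q_A^*$ and $q_A\Phi_0^*q_B^*\sigma^*$ are maps $(C/A)^{3-*}\to (C/A)_*$, so it first shows $\Phi_0$ may be replaced by $\Phi_0^*$ at the cost of an error $\ol{y(\Phi_1(x))}$ in which the $\smfrac{1}{s}$ cancels exactly against $\partial^*z=sy$, and then the flip using $\partial^*w=rx$ is an exact identity in $S^{-1}R$; no homotopy inverse $\sigma'$ and no correction homotopies $k',k''$ are needed.

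The one genuine weak point is your final inference. From ``$\tilde y(q_A\Phi_0^*q_B^*z_x)\,\ol{s_y}=s_x\,z_y(q_B\Phi_0q_A^*\tilde x)+r$ for some $r\in R$'' you cannot conclude anything modulo $R$ after multiplying by $\smfrac{1}{s_x}$ on the left and $\smfrac{1}{\ol{s_y}}$ on the right: in the (possibly noncommutative) Ore localisation, $\smfrac{1}{s_x}\,r\,\smfrac{1}{\ol{s_y}}$ need not lie in $R$. What you must prove --- and what your own computation does deliver if you track the factors --- is that every correction term has the form $s_x\,\rho\,\ol{s_y}$ with $\rho\in R$. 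Concretely: the $H(\tilde x)$ term and the $k'$-term coming from $\sigma\sigma'\simeq\Id_{C/A}$ carry $s_x$ on the left because $\partial c'=\pm s_x\,\sigma q_B\Phi_0q_A^*(\tilde x)$, and they carry $\ol{s_y}$ on the right because they are evaluated through $\delta z_y=s_y\sigma^*\tilde y$ and the convention $(a\cdot f)(x)=f(x)\ol{a}$ places $\ol{s_y}$ on the right; likewise the $k''$-term from $\sigma'\sigma\simeq\Id_{C/B}$ equals $\pm s_x\,z_y(\partial k''(q_B\Phi_0q_A^*\tilde x))=\pm s_x\,\tilde y(\sigma k''(q_B\Phi_0q_A^*\tilde x))\,\ol{s_y}$, again of the required shape (the remaining term $\tilde y(\partial k'c')$ dies by $\delta\tilde y=0$, as you say). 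With that stronger statement the left/right division is legitimate and your argument closes, up to the sign bookkeeping you already flag; as written, ``some $r\in R$'' is not enough.
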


\noindent Note that we can define a sesquilinear hermitian pairing without the hypothesis that the symmetric triad be Poincar\'{e}.
The next proposition proves Theorem~\ref{theorem:main}~(\ref{item:mainthm-hermitian}).

\begin{proof}[Proof of Proposition~\ref{prop:Hermitian}]
First, we claim that we can calculate $\wt{\Bl}$ using $\Phi_0^* \colon (C/A)^2 \to (C/A)_1$ instead of $\Phi_0$.  This follows from the following computation:
\begin{align*}\smfrac{1}{s} \ol{z (\Phi_0(x))} - \smfrac{1}{s} \ol{z (\Phi_0^*(x))} &= \smfrac{1}{s} \ol{z (\Phi_0 - \Phi_0^*)(x)}
 =  \smfrac{1}{s} \ol{z((\partial \Phi_1 - \Phi_1 \partial^*)(x))} \\
 =  \smfrac{1}{s} \ol{z(\partial \Phi_1(x))}
&=  \smfrac{1}{s} \ol{(\partial^*(z))(\Phi_1(x))}
 =  \smfrac{1}{s} \ol{(s y)(\Phi_1(x))}\\
 = \smfrac{1}{s} \ol{y(\Phi_1(x))\ol{s}}
 &=  \smfrac{1}{s} \ol{\ol{s}}\ol{y(\Phi_1(x))}
 =  \ol{y(\Phi_1(x))} = 0 \in S^{-1}R/R
\end{align*}
since $\ol{y(\Phi_1(x))} \in R$.  Now, suppose we also have an $r \in S, w \in V \otimes_{\zpx} (C/A)^1$, such that $\partial w = rx$.  Then:
\begin{align*} &\wt{\Bl}(y,x)  = \smfrac{1}{s}\,\ol{z(\Phi_0^*(x))}
 =  \smfrac{1}{s}\,\ol{z(\Phi_0^*(x))}\,\ol{r}\, \smfrac{1}{\ol{r}}
 =  \smfrac{1}{s}\,\ol{r z(\Phi_0^*(x))}\, \smfrac{1}{\ol{r}}
 =  \smfrac{1}{s}\,\ol{z(\Phi_0^*(rx))}\, \smfrac{1}{\ol{r}} \\
 =&   \smfrac{1}{s}\,\ol{z(\Phi_0^*(\partial^* w))}\, \smfrac{1}{\ol{r}}
 =  \smfrac{1}{s}\,\ol{z(\partial\Phi_0^*(w))}\, \smfrac{1}{\ol{r}}
 =  \smfrac{1}{s}\,\ol{\partial^*z(\Phi_0^*(w))}\, \smfrac{1}{\ol{r}}
 =  \smfrac{1}{s}\,\ol{(s y)(\Phi_0^*(w))}\, \smfrac{1}{\ol{r}} \\
 =&  \smfrac{1}{s}\,\ol{(y)(\Phi_0^*(w))\ol{s}}\, \smfrac{1}{\ol{r}}
 =  \smfrac{1}{s}\,\ol{\ol{s}}\,\ol{y(\Phi_0^*(w))}\, \smfrac{1}{\ol{r}}
 =  \ol{y(\Phi_0^*(w))}\, \smfrac{1}{\ol{r}}
 =  \Phi_0^*(w)(y)\, \smfrac{1}{\ol{r}}
 =  w(\Phi_0(y))\, \smfrac{1}{\ol{r}} \\
 =&  \ol{\smfrac{1}{r}\,\ol{w(\Phi_0(y))}} = \ol{\wt{\Bl}(x,y)},
\end{align*}
which shows that~$\wt{\Bl}$ is hermitian.  This completes the proof of Proposition~\ref{prop:Hermitian}.
\end{proof}

\begin{remark}
  The fact that one may use $\Phi_0^*$ to compute the twisted Blanchfield form may sometimes be exploited in computations, when the map $\Phi_0^*\colon (C/A)^2 \to (C/A)_1$ may be simpler than $\Phi_0 \colon (C/A)^2 \to (C/A)_1$.
\end{remark}

\begin{corollary}
Let $M$ be a closed oriented $3$-manifold.  For any unitary representation over an Ore domain $R$, the associated twisted Blanchfield pairing of $M$ is hermitian.
\end{corollary}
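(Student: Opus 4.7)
The plan is to reduce this to a direct application of Proposition~\ref{prop:Hermitian}. Since $M$ is closed, we have $\partial M = \emptyset$, so we may take $A = B = D = \emptyset$ in the manifold triad of Proposition~\ref{prop:triad-from-manifold}. This gives a $3$-dimensional symmetric Poincar\'e triad over $\Z[\pi]$ with chain complexes $D = A = B = 0$ and $C = C_*(\widetilde{M})$, equipped with the symmetric structure $\chi = \varphi^A = \varphi^B = 0$ and $\Phi_i = \varphi_i = \backslash\Delta_i([M])$.

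Next, I would observe that a unitary representation $\alpha \colon \pi \to \Aut(V)$ over the Ore domain $R$ is, as discussed at the start of Section~\ref{section:algebraic-defn-TBF}, exactly the data of an $(R,\Z[\pi])$-bimodule $V$ together with an isomorphism $\Theta \colon V \toiso V^*$ of $(R,\Z[\pi])$-bimodules. So we are in the setting of Definition~\ref{defn:chain-level-blanchfield}, and tensoring up we obtain the twisted Blanchfield pairing
\[\Bl \colon TH_1(V\otimes_{\Z[\pi]} C) \times TH_1(V\otimes_{\Z[\pi]} C) \to S^{-1}R/R.\]

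Now I verify the triad is hermitian in the sense of Definition~\ref{defn:hermitian-triad}. Because $A = B = \emptyset$, the quotient complexes satisfy $C/A = C = C/B$, and the quotient maps $q_A, q_B$ are both the identity. The chain equivalence $\sigma \colon C/B \to C/A$ can therefore be taken to be the identity map, whence the diagram of Definition~\ref{defn:hermitian-triad} commutes strictly (hence up to homotopy). There is no subtlety here regarding basepoints or basing paths since $A$ and $B$ are empty. Thus the triad is hermitian.

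Finally, Proposition~\ref{prop:Hermitian} applies verbatim to give that $\wt{\Bl}$ is hermitian on $TH^2(V\otimes_{\Z[\pi]} C)$, and since the triad is Poincar\'e, the symmetric structure map $\Phi_0$ induces the isomorphism used in Definition~\ref{defn:chain-level-blanchfield} to transfer this to the hermitian property of $\Bl$ on $TH_1(V\otimes_{\Z[\pi]} C)$. There is no real obstacle in this argument; the corollary is essentially immediate once one unwinds that the closed case trivially satisfies the hermitian triad condition with $\sigma = \Id$.
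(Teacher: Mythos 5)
Your proof is correct and follows exactly the paper's argument: for a closed $3$-manifold the triad has $A=B=D=0$, so with $\sigma = \Id$ the hermitian triad condition of Definition~\ref{defn:hermitian-triad} holds trivially and Proposition~\ref{prop:Hermitian} applies. The paper states this in one sentence; you have simply spelled out the same verification in more detail.
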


\begin{proof}
  In the case of a closed $3$-manifold, the chain complexes $A$, $B$ and $D$ of the associated symmetric triad all vanish.
\end{proof}

The corollary, together with Proposition~\ref{prop:non-singular}, gives Proposition~\ref{propn-closed-3-mfld-intro} from the introduction.

\section{Recovering the classical Blanchfield pairing}\label{section:recovering-classical-blanchfield}

Let $K$ be an oriented knot in $S^3$ and let $X_K = S^3 \sm \nu K$ be the knot exterior.
To recover the classical Blanchfield pairing on $X_K$, define $\pi := \pi_1(X_K)$, use $R = \Z[t,t^{-1}]$, let $S = \Z[t,t^{-1}]\sm \{0\}$, let $V=R=\Z[t,t^{-1}]$, and let $\a \colon \pi \to \Z = \langle t \rangle$ be the abelianisation.  Then $\Z[\pi]$ acts on $V$ via $\Z[\pi] \to \Z[t,t^{-1}]$ and then right multiplication. Write $\L := \Z[t,t^{-1}]$ and $Q := \Q(t)$.  Note that $H_1(X_K;\L) \cong H^2(X_K,\partial X_K;\L)$ is a $\L$-torsion module~\cite[Corollary~1.3]{Levine-knot-modules}.

First consider the symmetric hermitian triad
\[\xymatrix {%\ar @{} [dr] |{\stackrel{}{\sim}}
0 \ar[r] \ar[d] & 0 \ar[d] \\ 0 \ar[r] & C_*(X_K,\partial X_K;\L)
}, (\Delta([X_K,\partial X_K])/\Delta([\partial X_K]),0,0,0).\]
From this we obtain a sesquilinear hermitian Blanchfield pairing
\[\Bl_1 \colon H^2(X_K,\partial X_K;\L) \times H^2(X_K,\partial X_K;\L) \to Q/\L.\]
Next consider the symmetric Poincar\'{e} triad
\[\xymatrix {%\ar @{} [dr] |{\stackrel{}{\sim}}
0 \ar[r] \ar[d] & C_*(\partial X;\L) \ar[d] \\ 0 \ar[r] & C_*(X_K;\L)
}, (\Delta([X_K,\partial X_K],\Delta([\partial X_K]),0,0).\]
From this we obtain a sesquilinear, nonsingular Blanchfield pairing
\[\Bl_2 \colon H^2(X_K;\L) \times H^2(X_K,\partial X_K;\L) \to Q/\L.\]
The $\Ext^1$ condition for nonsingularity, from Proposition~\ref{prop:non-singular}, is easily seen to be satisfied since $H_0(X_K,\partial X_K;\L)$ vanishes\footnote{That the zeroth homology vanished was not crucial.  For pairings over $\L$ associated to closed manifolds, the zeroth homology is $\Z$, and we have a Bockstein exact sequence $0=\Ext^1_{\L}(\Z,Q) \to  \Ext^1_{\L}(\Z,Q/\L) \to \Ext^2_{\L}(\Z,\L) =0$, from which it follows that the central module is trivial, so the $\Ext^1$ condition for nonsingularity is satisfied.}.  Note that $C_*(X_K;\L)$ and $C_*(X_K,\partial X;\L)$ are not chain homotopy equivalent -- they have different zeroth homology groups, for example -- so we cannot directly apply Proposition \ref{prop:Hermitian} to deduce that $\Bl_2$ is hermitian.
In the long exact sequence of the pair $(X_K,\partial X_K)$ we have zero maps as shown
\[H^1(\partial X_K;\L) \xrightarrow{0} H^2(X_K,\partial X_K;\L) \toiso H^2(X_K;\L) \xrightarrow{0} H^2(\partial X_K;\L),\]
so the middle map is an isomorphism.
The adjoints of the two Blanchfield pairings above fit into the following diagram.
\[\xymatrix @C+1.4cm{H^2(X_K,\partial X_K;\L) \ar[d]_{\cong} \ar[dr]_-{\Bl_2}^-{\cong} & \\
\Hom_{\L}(H_1(X_K,\partial X_K;\L),Q/\L) \ar[d]_-{(\Phi^1_0)^*} \ar[r]^-{\cong}_-{(\Phi^2_0)^*} & \Hom_{\L}(H^2(X_K;\L),Q/\L) \ar[dl]^{\cong}  \\
\Hom_{\L}(H^2(X_K,\partial X_K;\L),Q/\L)  &  } \]
Some explanation of the diagram follows.
\begin{enumerate}
\item The top left vertical map is given by the combination of the Bockstein and universal coefficients described in the proof of Proposition~\ref{prop:non-singular}.
\item The bottom right diagonal map arises from the long exact sequence of the pair $(X_K,\partial X_K)$, as above.
\item The maps $(\Phi^1_0)^*$ and $(\Phi^2_0)^*$ are induced by the symmetric structure maps of the triads above that gave rise to $\Bl_1$ and $\Bl_2$ respectively.
\item  The bottom triangle commutes since, by definition, $\Phi^1_0$ and $\Phi^2_0$ factor through the maps in the sequence of a pair; see Section~\ref{section:symm-Pe-triads}.
\item The top triangle commutes by definition of $\Bl_2$.
\item The adjoint of the first pairing $\Bl_1$ is given by the composition of the two vertical maps.
\end{enumerate}
It follows immediately from the diagram and the above observations that  $\Bl_1$ is nonsingular.
Applying Poincar\'{e} duality we obtain the classical Blanchfield pairing
\[\Bl \colon H_1(X_K;\L) \times H_1(X_K;\L) \to Q/\L.\]
We have verified that it is hermitian and nonsingular (see \cite{Friedl-Powell-2015-1} for an alternative argument).
Inspection of the essential properties that we have used in the argument yields the following potentially useful proposition, which was Proposition~\ref{propn-3-mfld-bdy-intro} in the introduction.

\begin{proposition}
  Let $X$ be an oriented $3$-manifold with boundary, let $(R,S)$ be an Ore pair, let $Q= S^{-1}R$ and let $V$ be an $(R,\Z[\pi])$-bimodule with an inner product and a unitary representation $\a \colon \pi \to \Aut(V)$, such that at least one of $\Ext^1_R(H_0(X,\partial X; V),S^{-1}R/R)$ or
  $\Ext^1_R(H_0(X; V),S^{-1}R/R)$ vanishes.
  Moreover, suppose that the quotient map $C_*(X;V) \to C_*(X,\partial X;V)$ induces an isomorphism $TH^2(X,\partial X;V) \toiso TH^2(X;V)$.  Then the Blanchfield pairing $\Bl \colon TH_1(X;V) \times TH_1(X;V) \to Q/R$ is hermitian and nonsingular.
\end{proposition}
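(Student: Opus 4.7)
The plan is to mimic the knot-exterior argument given earlier in Section~\ref{section:recovering-classical-blanchfield}, now carried out for a general oriented $3$-manifold $X$ with boundary and a general unitary $(R,\Z[\pi])$-bimodule $V$. The idea is to set up two symmetric triads simultaneously: a hermitian but non-Poincar\'e one that produces the hermitian property, and a Poincar\'e but (a priori) non-hermitian one that produces nonsingularity, and then to paste the two pairings together with a diagram of universal coefficient / Bockstein / Poincar\'e duality maps whose only nontrivial ingredient is the cohomological version of the hypothesised identification $TH^2(X,\partial X;V)\toiso TH^2(X;V)$.

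First, I would consider the hermitian triad
\[\xymatrix {
0 \ar[r] \ar[d] & 0 \ar[d] \\ 0 \ar[r] & C_*(X,\partial X;V)
}, (\Phi^{(1)},0,0,0),\]
in which $A=B=D=0$. Because $C/A = C/B = C_*(X,\partial X;V)$, the identity chain map satisfies Definition~\ref{defn:hermitian-triad}, so Definition~\ref{defn:chain-level-blanchfield}, Proposition~\ref{Prop:chainlevelBlanchfield} and Proposition~\ref{prop:Hermitian} give a sesquilinear hermitian pairing
\[\wt{\Bl}_1\colon TH^2(X,\partial X;V)\times TH^2(X,\partial X;V)\to S^{-1}R/R.\]
Next I would form the Poincar\'e triad (choosing the orientation that matches the non-vanishing hypothesis; say $\Ext^1_R(H_0(X,\partial X;V),S^{-1}R/R)=0$)
\[\xymatrix {
0 \ar[r] \ar[d] & C_*(\partial X;V) \ar[d] \\ 0 \ar[r] & C_*(X;V)
}, (\Phi^{(2)},\varphi^A,0,0),\]
provided by Proposition~\ref{prop:triad-from-manifold}. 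By Propositions~\ref{Prop:chainlevelBlanchfield} and~\ref{prop:non-singular} this yields a sesquilinear pairing
\[\wt{\Bl}_2\colon TH^2(X,\partial X;V)\times TH^2(X;V)\to S^{-1}R/R\]
whose adjoint is an isomorphism. (If instead the other $\Ext^1$ vanishes, swap $A$ and $B$ throughout and argue symmetrically.)

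The key step is to assemble the commutative diagram analogous to the one in the knot case:
\[\xymatrix @C+1.2cm{TH^2(X,\partial X;V) \ar[d]_{\cong} \ar[dr]^-{\wt{\Bl}_2}_-{\cong} & \\
\Hom_R(TH_1(X,\partial X;V),S^{-1}R/R)^t \ar[d]_-{(\Phi^{(1)}_0)^*} \ar[r]^-{\cong}_-{(\Phi^{(2)}_0)^*} & \Hom_R(TH^2(X;V),S^{-1}R/R)^t \ar[dl]  \\
\Hom_R(TH^2(X,\partial X;V),S^{-1}R/R)^t  &  } \]
Here the top left vertical map is the Bockstein--universal coefficients isomorphism from the proof of Proposition~\ref{prop:non-singular}, and the bottom right diagonal map is the $R$-dual of the isomorphism $TH^2(X,\partial X;V)\toiso TH^2(X;V)$ supplied by the hypothesis on the quotient map $C_*(X;V)\to C_*(X,\partial X;V)$. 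Commutativity of the top triangle is the definition of $\wt{\Bl}_2$; commutativity of the bottom triangle reduces to the fact that both symmetric structure maps $\Phi^{(1)}_0$ and $\Phi^{(2)}_0$ are constructed from the symmetric construction applied to the fundamental class $[X,\partial X]$, so they factor through the long exact sequence of the pair $(X,\partial X)$ in the manner spelled out at the end of Section~\ref{section:symm-Pe-triads}.

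From the diagram the adjoint of $\wt{\Bl}_1$ is the composition of two isomorphisms, hence $\wt{\Bl}_1$ is nonsingular as well as hermitian. Finally I would translate from cohomology to homology by composing with the Poincar\'e--Lefschetz duality isomorphism $\Phi^{(2)}_0\colon TH^2(X,\partial X;V)\toiso TH_1(X;V)$ provided by the second triad, and use the hypothesised identification $TH_1(X;V)\toiso TH_1(X,\partial X;V)$ to match the homological pairing $\Bl\colon TH_1(X;V)\times TH_1(X,\partial X;V)\to S^{-1}R/R$ of Definition~\ref{defn:chain-level-blanchfield} with the induced pairing on $TH_1(X;V)\times TH_1(X;V)$. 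The main obstacle is the careful verification that the symmetric structures on the two triads above, both built from the same relative fundamental class, fit together to make the bottom triangle commute; this is the analogue of the factorisation argument used in the knot case, but because here $X$ is arbitrary it requires invoking the naturality of the symmetric construction with respect to the map of pairs $(X,\emptyset)\hookrightarrow (X,\partial X)$ rather than relying on any ad hoc identification.
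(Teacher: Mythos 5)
Your proposal is correct and is essentially the paper's own argument: the paper proves this proposition by "inspection of the essential properties" used in the knot-exterior computation immediately preceding it, i.e. exactly your two triads (the relative hermitian one and the Poincar\'e one with $\partial X$ as boundary piece), the Bockstein/universal-coefficients/duality diagram, and the factorisation of the two symmetric structure maps through the sequence of the pair. The only cosmetic point is to make sure the chosen $\Ext^1$ hypothesis matches which of $C/A$, $C/B$ carries the Bockstein in Proposition~\ref{prop:non-singular}, which your remark about swapping $A$ and $B$ already covers.
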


\begin{remark}
For example, as in \cite{Hillman:2012-1-second-ed}, take $X$ to be the exterior of an $m$-component oriented link $L$ in $S^3$, let $R$ be the ring obtained from $\Z[t_1^{\pm 1}, \dots, t_m^{\pm 1}]$ by inverting $(t_i-1)$ for $i=1,\dots,m$, and let $V=R$, with the action induced by the abelianisation $\Z[\pi_1(X)] \to \Z[\Z^m]=\Z[t_1^{\pm 1}, \dots, t_m^{\pm 1}] \to R$.  Then since $C_*(\partial X;R)$ is contractible, the proposition applies and the corresponding Blanchfield form of the link $L$ is hermitian and nonsingular.
\end{remark}

We can also recover the torsion linking pairing on a $k$-fold branched cover of a knot from the knot exterior.  We obtain the linking pairing on a finite branched covering space by using a representation whose dimension is equal to the degree of the covering space, in terms of data in the base space, instead of first understanding the covering space geometrically and then computing its linking pairing.

Compose the abelianisation homomorphism with the quotient map $\Z \to \Z_k$, for some natural number $k>1$, to obtain a homomorphism $\pi_1(X_K) \to \Z_k$.  Let $R = \Z$, let $S= \Z \sm \{0\}$, let $V=\Z^k$ and let $\a \colon \pi_1(X_K) \to \Z_k \to \Aut(V)=\GL(k,\Z)$ be determined by the regular representation.  Now, with two meridians of a knot dividing the boundary of $X_K$ in two, we have $\partial X_K = S^1 \times D^1 \cup_{S^1 \times S^0} S^1 \times D^1$, so we obtain:
\[\Bl_k \colon H_1(X_K,S^1\times D^1;\Z^k) \times H_1(X_K,S^1\times D^1;\Z^k) \to \Q/\Z.\]
Note that $\partial X_K \sm S^1 \times D^1 \cong S^1 \times D^1$, and moreover the longitude of $K$ maps trivially to $\Z_k$.  Thus the two boundary components $A=C_*(S^1\times D^1;\Z^k) = B$ are isomorphic, and there is a self -diffeomorphism of $X_K$, that is isotopic to the identity map, taking that switches the two $S^1 \times D^1$ components of $\partial X_K$.  It follows that we have a hermitian triad.
Now, $H_1(X_K,S^1;\Z^k)$ is isomorphic to $H_1(\Sigma_k;\Z)$, where $\Sigma_k$ is the $k$-fold cover of $S^3$ branched over $K$.  We obtain the classical linking form on this 3-manifold.  We also note that $H_1(X_K,S^1;\Z^k)$ is $\Z$-torsion, since $\Sigma_k$ is a rational homology sphere.

\section{Toroidal boundary divided into two annuli}\label{section:toroidal-boundary-divided}

In this section we consider the following situation.  Let $N$ be a 3-manifold whose boundary is a union of tori $\partial N = \bigsqcup^m S^1 \times S^1$, and let $A = B = \bigsqcup^m S^1 \times D^1 \subset \partial N$ be such that each torus boundary component of $N$ is decomposed as $S^1 \times D^1 \cup_{S^1 \times S^0} S^1 \times D^1$, with one annulus lying in~$A$ and one lying in~$B$.  As usual we denote $\pi=\pi_1(N)$.
%$S^1 \times S^1$, and suppose that each of the tori is decomposed into two annuli: $S^1 \times S^1 = S^1 \times D^1 \cup_{S^1 \times S^0} S^1 \times D^1$.
With this setup, we can also obtain a hermitian pairing for certain representations.  It may be that $TH^2(N,\partial N;V)$ and $TH^2(N;V)$ are not isomorphic, so Proposition~\ref{propn-3-mfld-bdy-intro} cannot be applied, and therefore one is led to decompose the boundary tori as above.

The key point is that the two inclusions $A \to \partial N$ and $B \to \partial N$ are homotopic maps, and in fact the map $B \to N$ can be obtained from the inclusion $A \to N$ composed with a self-diffeomorphism of $N$ that is isotopic to the identity.  Simply rotate each boundary torus by $\pi$ radians, in such a way as to interchange $A$ and $B$, and then interpolate between this diffeomorphism and the identity in a collar neighbourhood of $\partial N$.  More precisely, for each torus $S^1 \times S^1 \subset \partial N$, given coordinates on a collar neighbourhood $S^1 \times S^1 \times I$, $(\theta,\phi,t) \mapsto (e^{i\theta},e^{i\phi},t)$, for $\theta,\phi \in [0,2\pi)$ and $t \in [0,1]$, where $S^1 \times S^1 \times \{1\} \subset \partial N$, we have a diffeomorphism
\[\ba{rcl}
\mathcal{S} \colon S^1 \times S^1 \times I & \to & S^1 \times S^1 \times I \\
(\theta,\phi,t) &\mapsto & (\theta,\phi+ t\pi, t).
\ea\]
On $S^1 \times S^1 \times \{0\}$, $\mathcal{S}$ is the identity, while on $S^1 \times S^1 \times \{1\}$ it is the promised rotation, isotopic to the identity, that interchanges $A$ and $B$.  The isotopy to the identity is realised as $t$ varies from $0$ to $1$.

Choose a point $p \in S^1$ and choose a parametrisation of each connected component of $A$ and $B$ as $S^1 \times D^1_{A}$, respectively $S^1 \times D^1_B$, such that $(p,0_A) \sim (p,1_B)$ and $(p,0_B) \sim (p,1_A)$ in the gluing $S^1 \times D^1_A \cup_{S^1 \times S^0} S^1 \times D^1_B$.
In the following, we refer to the curve on $S^1 \times S^1 \subset \partial N$ defined as the union $\{p\} \times D^1_A \cup \{p\} \times D^1_B$ as a \emph{longitude} of the boundary component.

\begin{proposition}\label{prop:boundary-split}
Let $N$ be an oriented 3-manifold with toroidal boundary, and let $A$ and $B$ be as above.  Let $V$ be an $(R,\Z[\pi])$-bimodule with a unitary representation $\a \colon \pi \to \Aut(V)$, such that $\Ext^1_R(H_0(N,A; V),S^{-1}R/R) =0$ (equivalently $\Ext^1_R(H_0(N,B; V),S^{-1}R/R) =0$).
%Moreover, suppose that for each boundary component there is a choice of parametrisation of $A$ and $B$ as described above such that the longitude $\ell$ of that boundary component satisfies $\a(\ell)=1$.
Then using the slide diffeomorphism $\mathcal{S}$ defined above (which depends on the parametrisation) to induce an identification
\[TH_1(N,A;V) \toiso TH_1(N,B;V),\] we obtain a twisted Blanchfield pairing \[\Bl  \colon TH_1(N,A;V) \times TH_1(N,A;V) \to S^{-1}R/R\]
that is hermitian and nonsingular.
\end{proposition}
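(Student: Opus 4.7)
The plan is to produce a symmetric Poincar\'e triad from $(N;A,B)$ via Proposition~\ref{prop:triad-from-manifold} and then apply the algebraic results of Section~\ref{section:algebraic-defn-TBF}. Nonsingularity will follow from Proposition~\ref{prop:non-singular} under the stated $\Ext^1$ hypothesis, and the hermitian property from Proposition~\ref{prop:Hermitian} once I verify that the triad is hermitian in the sense of Definition~\ref{defn:hermitian-triad}, using $\mathcal{S}$ to construct the required chain equivalence $\sigma\colon C_*(N,B;V)\xrightarrow{\simeq} C_*(N,A;V)$. The two $\Ext^1$ vanishing hypotheses listed in the statement are equivalent because $\sigma$ induces an isomorphism $H_0(N,B;V)\cong H_0(N,A;V)$.

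The heart of the argument is the hermitian criterion. I would choose the basepoint $x_0\in N$ outside the collar neighbourhood of $\partial N$ supporting the isotopy from $\mathcal{S}$ to $\Id_N$, so that $x_0$ is fixed throughout the isotopy. Then $\mathcal{S}_*=\Id$ on $\pi:=\pi_1(N,x_0)$, so $\mathcal{S}$ lifts uniquely to a $\pi$-equivariant diffeomorphism of the universal cover $\wt{N}$. Since $\mathcal{S}(B)=A$, the induced $\zpx$-chain map $\mathcal{S}_\#$ on $C_*(\wt{N})$ sends the subcomplex covering $B$ into the one covering $A$, so after passing to the quotient and tensoring over $\zpx$ with $V$ it descends to the desired $\sigma$. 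The geometric isotopy from $\mathcal{S}$ to $\Id_N$ supplies a $\pi$-equivariant chain homotopy between $\mathcal{S}_\#$ and the identity on $C_*(\wt{N})$; tensoring with $V$ and passing to the quotient produces the homotopy $\sigma\circ q_B\simeq q_A$ required by Definition~\ref{defn:hermitian-triad}, whereupon Proposition~\ref{prop:Hermitian} yields the hermitian property.

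The main obstacle, as flagged in the remark after Definition~\ref{defn:hermitian-triad}, is the bookkeeping of basepoints and basing paths appearing in the preamble to Proposition~\ref{prop:triad-from-manifold}. To make sense of $C_*(A;\zpx)$ and $C_*(B;\zpx)$ as $\zpx$-module chain complexes, one must fix basing paths from $x_0$ to each component of $A$ and $B$, and the subtle point is to verify that under $\mathcal{S}$ the basing paths for $B$ are carried to paths homotopic rel endpoints, inside $N$, to those chosen for $A$, so that $\mathcal{S}_\#$ is genuinely $\zpx$-equivariant rather than merely equivariant up to an inner automorphism. Since $\mathcal{S}$ is supported in a collar and, on each boundary torus, preserves the designated longitudes setwise, routing the basing paths through these longitudes makes the required equivariance manifest and completes the verification.
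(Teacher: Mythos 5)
Your overall strategy coincides with the paper's --- build the symmetric Poincar\'e triad, get nonsingularity from Proposition~\ref{prop:non-singular}, and verify the hermitian criterion of Definition~\ref{defn:hermitian-triad} using the slide diffeomorphism $\mathcal{S}$ --- but your verification of the hermitian condition takes a genuinely different route. You fix the basepoint outside the collar supporting the isotopy, so that $\mathcal{S}_*=\Id$ on $\pi$, lift $\mathcal{S}$ and the isotopy $\pi$-equivariantly to $\wt{N}$, and obtain $\sigma$ together with the homotopy $\sigma q_B\simeq q_A$ directly as a map of relative complexes $C_*(\wt{N},p^{-1}(B))\to C_*(\wt{N},p^{-1}(A))$ (after tensoring with $V$); since these quotient complexes and the quotient maps $q_A,q_B$ do not depend on any basing-path choices, this is a clean and correct way to exhibit the hermitian triad. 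The paper instead works with the models $C_*(A;\Z[\pi])$ and $C_*(B;\Z[\pi])$ defined via explicit basing paths, first constructs a chain equivalence $C_*(B;\Z[\pi])\to C_*(A;\Z[\pi])$ making the square over $\mathcal{S}_*$ commute up to homotopy, and only then induces $\sigma$ on the cones. Your approach buys a shorter argument that sidesteps the basing-path computation entirely; the paper's buys an explicit boundary-level identification.

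The one weak spot is your final paragraph. With the natural choices (as in the paper: base $A$ at a point of $D$ via a path $\gamma_0$, and base $B$ by appending the longitudinal arc $\ell_A$), $\mathcal{S}$ does \emph{not} carry the $B$-basing path to one homotopic rel endpoints to the $A$-basing path: it carries $\ell_A\cdot\gamma_0$ to a path homotopic to $\ell\cdot\gamma_0$, so the induced identification of the boundary complexes is off by conjugation by the longitude $\ell$, which is generally nontrivial in $\pi$. The paper's resolution is that $\ell$ and the meridian $g$ both lie in the image of the abelian group $\pi_1(S^1\times S^1)$, so $\ell g\ell^{-1}=g$ and the two models $(\Z[\pi]\xrightarrow{g-1}\Z[\pi])$ agree; your phrase about routing basing paths through the longitudes making ``the required equivariance manifest'' skips exactly this point. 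Either invoke this commutativity, or take the $B$-basing path to be the image under $\mathcal{S}^{-1}$ of the $A$-basing path (then the equivariance is exact), or --- most consistent with your main construction --- drop the paragraph altogether, since your universal-cover construction of $\sigma$ never uses basing paths for $A$ and $B$.
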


\begin{proof}
The nonsingularity follows from Proposition~\ref{prop:non-singular}.  In order to prove that the pairing is hermitian, we will investigate the following diagram.
\[\xymatrix@R+0.5cm @C+0.5cm{0 \ar[r] & C_*(A;V) \ar[r] & C_*(N;V) \ar[r]^{q_A} & C_*(N,A;V) \ar[r] & 0 \\
0 \ar[r] &  C_*(B;V) \ar[r] \ar[u]_{\simeq} & C_*(N;V) \ar[r]^{q_B} \ar[u]^-{\mathcal{S}_*}_-{\simeq} \ar[ur]^{q_A} & C_*(N,B;V) \ar[r] \ar @{-->}[u]_{\sigma} & 0
 }\]
We will construct a chain equivalence $C_*(B;V) \to C_*(A;V)$ so that that the left hand square commutes up to homotopy. There is therefore an induced homotopy equivalence~$\sigma$ between the mapping cones, unique up to homotopy, shown as a dashed arrow in the diagram, that makes the right hand square commute up to homotopy.  Since the diffeomorphism $\mathcal{S}$ is isotopic to the identity, the upper triangle commutes up to homotopy.  Thus the lower right triangle also commutes up to homotopy.  But this is exactly the requirement for a hermitian triad in Definition~\ref{defn:hermitian-triad}.

We can and will assume that $\partial N = S^1 \times S^1$ has one connected component, for simplicity of exposition.  The argument for the general case just requires duplicating the following argument for each copy of $S^1 \times S^1$ in $\partial N$.

Let $A \cap B =D$, which is homeomorphic to $S^1 \times S^0$.
The choices of parametrisation determine a diffeomorphism $B \toiso A$, such that the diagram
\[\xymatrix{A \ar[r] & N \\ B \ar[r] \ar[u] & N \ar[u]_{\mathcal{S}} }\]
commutes.
We need to carefully consider the basepoints and basing paths so that the corresponding diagram for chain complexes with $V$ coefficients commutes.  First we consider $\Z[\pi]$ coefficients, and then at the end we will specialise to $V$ coefficients.

Choose a path $\gamma_0$ from the basepoint of $N$ to the point $(1,-1)$ of $D=S^1 \times S^0$.  For the basepoint of $A$ use $(1,-1) \in S^1 \times S^0$, and in order to define the chain complex $C_*(A;\Z[\pi])$ use the path $\gamma_0$.
Divide the longitude $\ell$ into $\ell = \ell_A \cup \ell_B$, where $\ell_A = \ell \cap A$ and $\ell_B = \ell \cap B$.  For the basepoint of $B$, use $(1,1) \in D = S^1 \times S^0$, and for the basing path use the concatenation $\ell_A \cdot \gamma_0$.  Note that $\mathcal{S}$ sends $\ell_A \cdot \gamma_0$ to $\ell \cdot \gamma_0$.

With these basing paths, we have representatives $C_*(A;\Z[\pi]) = C_*(B;\Z[\pi]) = (\Z[\pi] \xrightarrow{(g-1)} \Z[\pi])$ for the chain equivalence classes of the chain complexes of $A$ and $B$, where $g \in \pi$ is the homotopy class of curve $\gamma_0 \cdot \mu_{-1} \cdot \overline{\gamma_0}$ corresponding to the meridian $\mu_{-1}$ of $\partial N$ that traverses $S^1 \times \{-1\} \subset D$ once with positive orientation.  In particular, one should observe that there is a homotopy $\gamma_0 \cdot \mu_{-1} \cdot \overline{\gamma_0} \sim \gamma_0 \cdot \ell_A \cdot \mu_{-1} \cdot \overline{\ell_A} \cdot \overline{\gamma_0}$.
We can therefore use the identity map for the left hand vertical map of the diagram above.

We remark that in the symmetric Poincar\'{e} triad, the maps $C_*(D;\Z[\pi]) \to C_*(A;\Z[\pi])$ and $C_*(D;\Z[\pi]) \to C_*(B;\Z[\pi])$ are required to be split injections, so these chain complexes for $A$ and $B$ are only chain equivalent to the chain complexes we need to use to compute the Blanchfield form.  However for the current proof we only need the chain equivalence classes, since we only need the left hand square of the diagram above to commute up to homotopy.

The chain complex $\mathcal{S}_*(C_*(B;\Z[\pi]))$ is given by $(\Z[\pi] \xrightarrow{(\ell g \ell^{-1} -1 )} \Z[\pi])$, whereas the image of $C_*(A;\Z[\pi])$ is the chain complex $(\Z[\pi] \xrightarrow{(g-1)} \Z[\pi])$ as discussed above.  
Since $g$ and $\ell$ form a basis of $\pi_1(\partial N) = \Z \oplus \Z \leq \pi_1(N)$, they commute, and so $\ell g \ell^{-1} = g$.  It follows that the left hand square of the diagram above commutes.
%Since $\a(\ell)=1$, the left hand square of the diagram above indeed commutes with $V$ coefficients.  
Therefore, as explained at the beginning of the proof, we obtain the chain equivalence $\sigma \colon C_*(N,B;V) \simeq C_*(N,A;V)$ as required.

\end{proof}

\bibliographystyle{alpha}
\def\MR#1{}
\bibliography{research}

\end{document}